\newlist{steps}{enumerate}{1}
\setlist[steps, 1]{wide=0pt, label=Step \arabic*., font=\scshape}
\newcommand{\eps}{\varepsilon}
\DeclareMathOperator{\sgn}{sgn}
\newcommand{\dd}{\mathop{}\!\mathrm{d}}
\newcommand{\abs}[1]{\left\lvert#1\right\rvert}
\renewcommand{\textbf}[1]{\begingroup\bfseries\mathversion{bold}#1\endgroup} 
\newcommand{\citer}[2][]{#1 in \cite{#2}} 
\theoremstyle {definition} 
\newtheorem{ex}{Example}[section]
\theoremstyle{plain} \newtheorem {theo} {Theorem}[section]
\newtheorem{prop}[theo]{Proposition}
\newtheorem{lem}[theo]{Lemma}
\newtheorem{remark}[theo]{Remark}
\theoremstyle{plain}
\theoremstyle{definition}
\begin{document}

\title{Asymptotic behaviour for a time-inhomogeneous 
Kolmogorov type diffusion}
\author{Mihai Gradinaru and Emeline Luirard}

\affil{\small Univ Rennes, CNRS, IRMAR - UMR 6625, F-35000 Rennes, France\\
\texttt{\small \{Mihai.Gradinaru,Emeline.Luirard\}@univ-rennes1.fr}}
\date{}
\maketitle

{\small\noindent {\bf Abstract:}~ We study a kinetic stochastic model with a non-linear time-inhomogeneous drag force and a Brownian-type random force. More precisely, the Kolmogorov type diffusion $(V,X)$ is considered: here $X$ is the position of the particle and $V$ is its velocity and is solution of a stochastic differential equation driven by a one-dimensional Brownian motion, with the drift of the form $t^{-\beta}F(v)$. The function $F$ satisfies some homogeneity condition and $\beta$ is positive. The behaviour of the process $(V,X)$ in large time is proved by using stochastic analysis tools.}\\
{\small\noindent{\bf Keywords:}~ kinetic stochastic equation; time-inhomogeneous diffusions; explosion times; scaling transformations; asymptotic distributions; ergodicity.}\\
{\small\noindent{\bf MSC2010 Subject Classification:}~Primary~60J60; Secondary~60H10; 60J65; 60F17.}

\section{Introduction}

In several domains as fluids dynamics, 
statistical mechanics, biology, a number of models are based on the Fokker-Planck and Langevin equations driven by Brownian motion or could be non-linear or driven by other random noises.
For example, in \cite{CattiauxAsymptoticanalysisdiffusion2010} the persistent turning walker model was introduced, inspired by the modelling of fish motion. An associated two-component  Kolmogorov  type diffusion solves
a kinetic Fokker-Planck equation based on an Ornstein-Uhlenbeck Gaussian process and the authors studied 
the large time behaviour of this model by using appropriate tools from stochastic analysis. 
One of the natural questions is the behaviour in large time of the solution to the corresponding stochastic differential equation (SDE).
 Although the tools of partial differential equations allow us to ask of this kind of questions, since these models are probabilistic, tools based on stochastic processes could be more natural to use.
\medskip

In the last decade 
the asymptotic study of solutions of non-linear Langevin's type was the subject of an important number of papers,
see \cite{CattiauxDiffusionlimitkinetic2019}, \cite{EonGaussianasymptoticsnonlinear2015}, \cite{FournierOnedimensionalcritical2021}. For instance, in \cite{FournierOnedimensionalcritical2021}  
the following system is studied 
\[V_t=v_0+B_t-\frac{\rho}{2}\int_{0}^{t}F(V_s)\dd s
\quad\mbox{ and }\quad X_t=x_0+\int_{0}^{t}V_s \dd s.
\]
In other words one considers a particle moving such that its velocity is a diffusion with an invariant measure behaving like $(1+|v|^2)^{-\rho/2}$, as $|v|\to+\infty$. 
The authors prove that for large time, after a suitable rescaling, the position process behaves as a Brownian motion or other stable processes, following the values of $\rho$. Results have been extended to additive functional of $V$ in \cite{BethencourtStablelimittheorems2021}. It should be noticed that these cited papers use the standard tools associated with time-homogeneous equations: invariant measure, scale function and speed measure. Several of these tools will not be available when the drag force is depending explicitly on time. 
In \cite{GradinaruExistenceasymptoticbehaviour2013}, a non-linear SDE driven by a Brownian motion but having time-inhomogeneous drift coefficient was studied and its large time behaviour was described. Moreover, sharp rates of convergence are proved for the 1-dimensional marginal of the solution. In the present paper, we consider the velocity process as satisfying the same kind of SDE. 
\bigskip

Let us describe our framework: consider a one-dimensional time-inhomogeneous  stochastic kinetic model driven by a Brownian motion. We  denote by $(X_t)_{t\geq0}$ the one-dimensional process describing the position of a particle at time $t$ having the velocity $V_t$. The velocity process $(V_t)_{t\geq 0}$ is supposed to follow a Brownian dynamic in a  potential $U(t,v)$, varying in time:
\begin{equation*}\label{position}
\dd V_t=\dd B_t-\dfrac{1}{2}\partial_vU(t,V_t)\dd t\quad
\mbox{ and }\quad
X_t=X_0+\int_{0}^{t}V_s\dd s.
\end{equation*}
This system can be viewed as a perturbation of the classical two-component Kolmogorov diffusion 
\[
\dd V_t=\dd B_t\quad
\mbox{ and }\quad
X_t=X_0+\int_{0}^{t}V_s\dd s.
\]
In the present paper the potential is supposed to grow slowly to infinity, and it will be supposed to be of the form $t^{-\beta}\int_{0}^{v}F(u)\dd u$, with $\beta>0$ and $F$ satisfying some homogeneity condition. It describes a one dimensional particle evolving in a force field  $Ft^{-\beta}$ and undergoing many small random shocks.
A natural question is to understand the behaviour of the process $(V,X)$ in large time. More precisely we look for the limit in distribution of 
$v(\eps)(V_{t/\eps},\eps X_{t/\eps})_t$, as $\eps \to 0$, where $v(\eps)$ is some rate of convergence.
Our results are proved on the product of path spaces and consequently contain those of \cite{GradinaruExistenceasymptoticbehaviour2013}. \\
If $F=0$, it is not difficult to see that the rescaled position process 
$(\eps^{\nicefrac{1}{2}}V_{t/\eps}, \eps^{\nicefrac{3}{2}}X_{t/\eps})_t$ converges in distribution towards the Kolmogorov diffusion $(B_t, \int_0^t B_s \dd s)_t$. 
We prove that this kinetic behaviour still holds for sufficiently "small at infinity" potential.
The strategy to tackle this problem is based on estimates of moments of the velocity process.
The main result can then be extended for the case when the potential is equally weighted in some sense as the random noise. The potential either offsets the random noise (critical regime) or swings with it (sub-critical regime).
\\
As suggested at the beginning of the introduction, other random noises can be considered. In \cite{GradinaruAsymptoticbehaviortimeinhomogeneous2021}, the case of a Lévy random noise is analysed. The case of a stochastic system in a harmonic potential is the purpose of a future work (see \cite{LuirardLoilimitemodeles2022}).

\bigskip
The organisation of our paper is as follows: in the next section we introduce notations, and we state our main results. Results about existence and non-explosion of solutions are stated in Section 3. Estimates of the moments of the velocity process are given in Section 4 while the proofs of our main results are presented in Section 5. 

\section{Notations and main results}
Let $(B_t)_{t\geq0}$ be a standard Brownian motion, $\beta$ a real number and $F$ a continuous function which is supposed to satisfy either 
\begin{equation}\tag{$H_1^{\gamma}$}\label{hyp1_levy}
	\mbox{for some }\gamma \in \mathbb{R},\ \forall v \in \mathbb{R},\ \lambda>0, \ F(\lambda v)=\lambda^{\gamma}F(v),
\end{equation}
or
\begin{equation}\tag{$H_2^{\gamma}$}\label{hyp2_levy}
	\abs{F}\leq G\mbox{ where }G \mbox{ is a positive function satisfying \eqref{hyp1_levy}.}
\end{equation}

Each assumption implies that there exist a positive constant $K$ such that, for all $v\in \mathbb{R}$, $\abs{F(v)}\leq K\abs{v}^{\gamma}$.
Obviously \eqref{hyp2_levy} is a generalisation of \eqref{hyp1_levy}. 
In the following, $\sgn$ is the sign function with convention $\sgn(0)=0$.
As an example of function satisfying \eqref{hyp1_levy}  one can keep in mind $F:v\mapsto \sgn(v)\abs{v}^{\gamma}$ (see also \cite{GradinaruExistenceasymptoticbehaviour2013}), and as an example of function satisfying \eqref{hyp2_levy} 
(with $\gamma=0$) 
$F:v\mapsto \nicefrac{v}{(1+v^2)}$ (see also \cite{FournierOnedimensionalcritical2021}). 

\begin{remark}\label{pi}
	If a function $\pi$ satisfies \eqref{hyp1_levy}, then for all $x\in \mathbb{R}$,  $\pi(x)=\pi(\sgn(x))\abs{x}^{\gamma}$.
\end{remark}
We consider the following one-dimensional stochastic kinetic model, for $t\geq t_0>0$,
\begin{equation}\label{equation time_levy}\tag{\text{SKE}}
	\dd V_t= \dd B_t-t^{-\beta}F(V_{t})\dd t,\ V_{t_0}=v_0>0, 
	\quad\mbox{ and }\quad 
	\dd X_t=V_t \dd t, \ X_{t_0}=x_0\in \mathbb{R}.
\end{equation} 
Most of the convergences take place in the space of continuous functions ${\mathcal{C}}((0,+\infty), \mathbb{R})$ endowed by the uniform topology
\[\displaystyle \dd_u:f,g\in \mathcal{C}((0,+\infty),\mathbb{R})\mapsto \sum_{n=1}^{+\infty}\dfrac{1}{2^n}\min\Big(1,\sup_{[\frac{1}{n},n]}\abs{f-g}\Big).\]
For a family $((Z_t^{(\eps)})_{t> 0})_{\eps>0}$ of continuous processes, we write
\[(Z_t^{(\eps)})_{t> 0} \quad \Longrightarrow \quad (Z_t)_{t>0}, \]
if $(Z_t^{(\eps)})_{t>0}$ converges in distribution to $(Z_t)_{t>0}$ in ${\mathcal{C}}((0,+\infty), \mathbb{R})$, as $\eps \to 0$.\\
We write \[(Z_t^{(\eps)})_{t> 0} \quad \stackrel{f.d.d.}{\Longrightarrow}\quad  (Z_t)_{t>0}, \]
if for all finite subsets $S \subset (0,+\infty)$, the vector $(Z_t^{(\eps)})_{t\in S}$ converges in distribution to $(Z_t)_{t\in S}$ in $\mathbb{R}^S$, as $\eps \to 0$.\\
Let us state our main results. Set $q:=\dfrac{\beta}{\gamma+1}$.
\begin{theo}\label{main_thm_inh} 
	Consider $\gamma\geq 0$, and $q >\frac{1}{2}$. Assume that either \eqref{hyp1_levy} or \eqref{hyp2_levy} is satisfied. Let $(V_t,X_t)_{t\geq t_0}$ be the solution to \eqref{equation time_levy} and $(\mathcal{B}_t)_{t\geq0}$ be a standard Brownian motion. Furthermore, if $\gamma\geq 1 $, we suppose that for all $v\in \mathbb{R}$,
	$vF(v)\geq 0$.
	\\Then, as $\eps\to 0$, 
	\begin{equation*}
	\left(\sqrt{\eps}V_{t/\eps}, \eps^{\nicefrac{3}{2}}X_{t/\eps}\right)_{t\geq \eps t_0}\stackrel{\text{}}{\Longrightarrow}\left(\mathcal{B}_{t}, \int_{0}^t\mathcal{B}_s \dd s\right)_{t\geq0}.
	\end{equation*}
\end{theo}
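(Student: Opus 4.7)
The plan is to show that, after rescaling, the drift term in \eqref{equation time_levy} becomes asymptotically negligible, so that the rescaled velocity converges to a Brownian motion and the rescaled position to its time-integral. Introduce $V_t^{(\eps)} := \sqrt{\eps}\, V_{t/\eps}$ and $X_t^{(\eps)} := \eps^{\nicefrac{3}{2}} X_{t/\eps}$. By Brownian scaling, $\tilde{B}_t := \sqrt{\eps}\,(B_{t/\eps} - B_{t_0})$ is a standard Brownian motion. Integrating \eqref{equation time_levy} and rescaling yields
\[
V_t^{(\eps)} = \sqrt{\eps}\, v_0 + \tilde{B}_t - R_\eps(t), \qquad R_\eps(t) := \sqrt{\eps} \int_{t_0}^{t/\eps} s^{-\beta} F(V_s) \dd s,
\]
and a change of variables together with the homogeneity of $F$ reveals that the natural scale of $R_\eps$ is governed by the prefactor $\eps^{\beta - (\gamma+1)/2}$; vanishing of this prefactor is precisely the condition $q > \nicefrac{1}{2}$. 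Likewise,
\[
X_t^{(\eps)} = \eps^{\nicefrac{3}{2}} x_0 + \int_{\eps t_0}^{t} V_u^{(\eps)} \dd u.
\]

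The next step is to control $R_\eps$ in mean. Using $\abs{F(v)} \leq K \abs{v}^\gamma$ (valid under either \eqref{hyp1_levy} or \eqref{hyp2_levy}) and the moment estimates from Section~4, which are expected to yield $\mathbb{E}\abs{V_s}^\gamma \leq C(1 + s^{\gamma/2})$ for large $s$, one obtains
\[
\mathbb{E}\abs{R_\eps(t)} \leq K \sqrt{\eps} \int_{t_0}^{t/\eps} s^{-\beta}\bigl(1 + s^{\gamma/2}\bigr) \dd s.
\]
Elementary computation of the right-hand side in the three sub-cases, according to the sign of $\beta - 1 - \gamma/2$ (with a logarithm in the boundary case), delivers a bound tending to zero as $\eps \to 0$, uniformly for $t$ in compact subsets of $(0, +\infty)$, exactly under the hypothesis $q > \nicefrac{1}{2}$.

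Once $R_\eps \to 0$ in probability, uniformly on compact subsets of $(0,+\infty)$, the decomposition $V^{(\eps)} = \sqrt{\eps}\,v_0 + \tilde{B} + o_{\mathbb{P}}(1)$ delivers $V^{(\eps)} \Longrightarrow \mathcal{B}$ in $\mathcal{C}((0,+\infty), \mathbb{R})$. The joint convergence then follows by writing $(V^{(\eps)}, X^{(\eps)})$ as a continuous image of $V^{(\eps)}$ under the map $f \mapsto \bigl(f, \int_0^\cdot f(u) \dd u\bigr)$ (up to an asymptotically negligible constant) and invoking the continuous mapping theorem. The main obstacle is the moment control of $\abs{V_s}^\gamma$ at large $s$: in the super-linear regime $\gamma \geq 1$, it is the sign condition $vF(v) \geq 0$ which guarantees that the drift is restoring and that $V_s$ inherits Brownian-type moments; without this structural assumption the velocity could grow faster than a Brownian motion, breaking the bound on $R_\eps$ and ruining the perturbative argument.
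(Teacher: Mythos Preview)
Your proposal is correct and follows essentially the same route as the paper: decompose $V^{(\eps)}$ into a rescaled Brownian part plus the drift remainder $R_\eps$, bound $\sup_{t\le T}\abs{R_\eps(t)}$ in $L^1$ via $\abs{F(v)}\le K\abs{v}^\gamma$ together with the moment estimate $\mathbb{E}\abs{V_s}^\gamma\le C\,s^{\gamma/2}$ from \cref{esperance}, and then pass to the joint limit by a continuous-mapping argument. The only point the paper treats with a little more care than your sketch is the integration map near $0$: since $f\mapsto\int_0^{\cdot}f$ is not continuous on $\mathcal{C}((0,+\infty))$, the paper extends $V^{(\eps)}$ by a constant on $[0,\eps t_0]$, works on $\mathcal{C}([0,+\infty))$, and compares the $\eps$-dependent map $g_\eps(v)=(v,\int_{\eps t_0}^{\cdot}v)$ with $g(v)=(v,\int_0^{\cdot}v)$ via \citer[Theorem 3.1]{BillingsleyConvergenceprobabilitymeasures1999}.
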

\begin{theo}\label{critique}
	Consider $\gamma\geq 0$ and $q=\frac{1}{2}$. Assume that \eqref{hyp1_levy} is satisfied. Let $(V_t,X_t)_{t\geq t_0}$ be the solution to \eqref{equation time_levy}. If $\gamma\geq 1$, we suppose furthermore that for all $v\in \mathbb{R}$, 
	$vF(v)\geq 0$.
	\\
	Call $\widetilde{H}$ the eternal ergodic process, solution to the homogeneous SDE 
	\begin{equation*}
	\dd H_s=\dd W_s-\dfrac{H_s}{2}\dd s-F\big(H_s\big)\dd s,
	\end{equation*} such that the law of $H_{-\infty}$ is the invariant measure, where $(W_t)_{t\geq0}$ is again a standard Brownian motion. Setting $\Lambda_{F, t_1, \cdots, t_d}$ for the f.d.d. of $\widetilde{H}$, we call $(\mathcal{V}_t)_{t\geq0}$ the process whose finite dimensional distribution (f.d.d.) are $T*\Lambda_{F,\log(t_1), \cdots, \log(t_d)}$, the pushforward measure of $\Lambda_{F,\log(t_1), \cdots, \log(t_d)}$ by the linear map $T(u_1, \cdots, u_d):=(\sqrt{t_1}u_1, \cdots, \sqrt{t_d}u_d)$, that is $(\mathcal{V}_t)_{t \geq0}=(\sqrt{t}\widetilde{H}_{\log(t)})_{t \geq0}$. \\Then, as $\eps\to 0$, 
	\begin{equation*}
	\left(\sqrt{\eps}V_{t/\eps}, \eps^{\nicefrac{3}{2}}X_{t/\eps}\right)_{t\geq \eps t_0}\stackrel{\text{}}{\Longrightarrow}\left(\mathcal{V}_{t}, \int_{0}^t\mathcal{V}_s \dd s\right)_{t\geq0}.
	\end{equation*}
\end{theo}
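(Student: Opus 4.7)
The plan is to reduce the critical rescaled SDE to a time-homogeneous one by a parabolic scaling in log-time, and then to exploit ergodicity to let the starting time escape to $-\infty$. First, set $\mathcal{B}_t^{(\eps)}:=\sqrt{\eps}\, B_{t/\eps}$, which is a standard Brownian motion by Brownian scaling, and define the rescaled velocity $\widetilde{V}_t^{(\eps)}:=\sqrt{\eps}\, V_{t/\eps}$. A direct computation using the homogeneity \eqref{hyp1_levy} together with the critical identity $\beta=\nicefrac{(\gamma+1)}{2}$ shows
\[
\dd \widetilde{V}_t^{(\eps)}=\dd \mathcal{B}_t^{(\eps)}-t^{-\beta}F\bigl(\widetilde{V}_t^{(\eps)}\bigr)\dd t,\qquad \widetilde{V}_{\eps t_0}^{(\eps)}=\sqrt{\eps}\,v_0,
\]
so that the $\eps$-dependence has been transferred entirely to the initial time and initial value.

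Next I introduce the logarithmic time change
\[
H_s^{(\eps)}:=e^{-s/2}\,\widetilde{V}_{e^s}^{(\eps)}, \qquad s\geq s_0^{(\eps)}:=\log(\eps t_0).
\]
By It\^o's formula, the martingale part $e^{-s/2}\dd\mathcal{B}_{e^s}^{(\eps)}$ has bracket $\dd s$ and therefore defines, via L\'evy's characterisation, a standard Brownian motion $W^{(\eps)}$. Using again the homogeneity of $F$ and $\beta=\nicefrac{(\gamma+1)}{2}$, the $e^{(1/2-\beta+\gamma/2)s}$ factor in the drift collapses to $1$, yielding the autonomous SDE
\[
\dd H_s^{(\eps)}=\dd W_s^{(\eps)}-\frac{H_s^{(\eps)}}{2}\dd s-F\bigl(H_s^{(\eps)}\bigr)\dd s, \qquad H_{s_0^{(\eps)}}^{(\eps)}=\frac{v_0}{\sqrt{t_0}}.
\]
Notice that the initial condition does not depend on $\eps$; only the starting time $s_0^{(\eps)}$ drifts to $-\infty$. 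Existence and non-explosion of $H^{(\eps)}$ for $\gamma\geq 0$ with $vF(v)\geq 0$ when $\gamma\geq 1$ follow from the results of Section~3 applied to this homogeneous equation.

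The core step is then an ergodic argument for the homogeneous SDE above. The drift $-h/2-F(h)$ points towards the origin and satisfies a Lyapunov condition (for instance against $h\mapsto h^2$), so the diffusion admits a unique invariant probability measure $\mu_F$ and is ergodic with fast mixing; let $\widetilde{H}$ denote the eternal stationary solution as in the statement. Starting the homogeneous SDE at time $s_0^{(\eps)}\to -\infty$ from the fixed point $v_0/\sqrt{t_0}$ and restricting to any compact window $[-T,T]$, a coupling argument (or an entropy/Wasserstein contraction estimate) shows that the law of $H^{(\eps)}|_{[-T,T]}$ converges, in the uniform topology on $\mathcal{C}([-T,T],\mathbb{R})$, to the law of $\widetilde{H}|_{[-T,T]}$. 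Undoing the change of variables gives $\widetilde{V}_t^{(\eps)}=\sqrt{t}\,H_{\log t}^{(\eps)}$, so on every compact $[\nicefrac{1}{n},n]\subset(0,+\infty)$ we obtain $\widetilde{V}^{(\eps)}\Longrightarrow\mathcal{V}$ with $\mathcal{V}_t=\sqrt{t}\,\widetilde{H}_{\log t}$, which is precisely the desired $\mathcal{V}$.

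Finally, the position component follows from the identity
\[
\eps^{\nicefrac{3}{2}}X_{t/\eps}=\eps^{\nicefrac{3}{2}}x_0+\int_{\eps t_0}^{t}\widetilde{V}_u^{(\eps)}\dd u,
\]
so that the joint convergence is obtained by applying the continuous mapping theorem to the map $\widetilde{v}\mapsto(\widetilde{v},\int_0^{\cdot}\widetilde{v}_u\dd u)$. The main obstacle I anticipate is justifying this last step in the topology $\dd_u$ of $\mathcal{C}((0,+\infty),\mathbb{R})$: one must control the integral of $\widetilde{V}^{(\eps)}$ over the shrinking interval $[\eps t_0,\cdot]$ and rule out mass accumulating at $0^+$. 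The natural route is to use the uniform moment bounds for $V$ coming from Section~4 (which translate into $\mathbb{E}[|\widetilde{V}_u^{(\eps)}|]\leq C\sqrt{u}$ uniformly in $\eps$, consistent with $\mathbb{E}[|\mathcal{V}_u|]=\sqrt{u}\,\mathbb{E}_{\mu_F}|H|$), so that $\int_0^{\delta}|\widetilde{V}_u^{(\eps)}|\dd u$ is tight in $\delta\to 0$ and the boundary contribution near $0$ is negligible, allowing the integral functional to be applied through the convergence of $\widetilde{V}^{(\eps)}$.
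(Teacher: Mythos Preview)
Your plan follows the same backbone as the paper's proof: the logarithmic time change turns the critical rescaled velocity into the time-homogeneous SDE $\dd H = \dd W - \tfrac{H}{2}\dd s - F(H)\dd s$, with the $\eps$-dependence absorbed into the starting time $s_0^{(\eps)}\to -\infty$ at a fixed initial point, and ergodicity then delivers the stationary limit $\widetilde H$. The only substantive difference is in how functional convergence of the velocity is established. You propose a direct path-space coupling on compact windows $[-T,T]$; the paper instead proves finite-dimensional convergence via total-variation ergodicity (its Lemma~\ref{lem: ergodic_fdd}) and then proves tightness on each compact $[m,M]\subset(0,\infty)$ via the Kolmogorov criterion, using the moment bounds of Proposition~\ref{esperance} to get $\mathbb{E}[|V_t^{(\eps)}-V_s^{(\eps)}|^\alpha]\leq C_{\alpha,m,M}(t-s)^{\alpha/2}$ uniformly in $\eps$. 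Your route is legitimate and arguably cleaner---TV convergence of the marginal at time $-T$ plus the Markov property does give TV convergence of the path law on $[-T,\infty)$---but you should state explicitly where exponential ergodicity comes from (e.g.\ the Lyapunov/Poincar\'e argument you allude to), whereas the paper's f.d.d.+tightness approach only needs the moment estimates for the tightness half. For the position component you correctly identify the sole obstacle (mass of the integral near $0^+$) and the right fix via $\mathbb{E}[|\widetilde V_u^{(\eps)}|]\leq C\sqrt{u}$; the paper handles it equivalently by showing that $g_\eps\to g$ uniformly on compacts of $\mathcal{C}([0,+\infty))$.
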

\begin{remark}
		The one-dimensional distribution of $(\mathcal{V}_t)_{t\geq0}$ has already been explicitly computed (see \citer[Theorem 4.1]{GradinaruExistenceasymptoticbehaviour2013}).
\end{remark}
\begin{theo}\label{sub-critical}
	Consider $\gamma\geq 1$ and $q<\frac{1}{2}$. Assume that $F:v\mapsto \rho \sgn(v)\abs{v}^{\gamma}$ with $\rho>0$. Let $(V_t,X_t)_{t\geq t_0}$ be the solution to \eqref{equation time_levy}. 
	Call $\widehat{H}$ the ergodic process, solution to the homogeneous SDE
	\begin{equation*}
		\dd H_s=\dd W_s-F\left(H_s\right)\dd s,
	\end{equation*} where $(W_t)_{t\geq0}$ is a standard Brownian motion. Call $\Pi_F$ its invariant measure. We call $(\mathscr{V}_t)_{t\geq0}$ the process whose f.d.d.\ are $T*\left(\otimes^d \Pi_{F}\right)$, the pushforward measure of $\otimes^d \Pi_{F}$ by the linear map $T(u_1, \cdots, u_d):=({t_1}^qu_1, \cdots, {t_d}^qu_d)$. \\
	Then, as $\eps \to 0$,
	\begin{equation*}
		\left(\eps^{q}V_{t/\eps}\right)_{t\geq \eps t_0}\stackrel{\text{f.d.d.}}{\Longrightarrow}\left(\mathscr{V}_{t}\right)_{t\geq0}.
		\end{equation*}
	Moreover, in the linear case (i.e.\ $\gamma=1$) and if $\beta>-\frac{1}{2}$, we define $(\mathscr{X}_t)_{t\geq0}$ the centered Gaussian process with covariance function $K(s,t):= (\rho^2(1+2\beta))^{-1}(s\wedge t)^{1+2\beta}$. \\	Then, as $\eps \to 0$,
	\begin{equation}\label{eq: cv_X_linear}
		\left(\eps^{\beta+\frac{1}{2}}X_{t/\eps}\right)_{t\geq \eps t_0}\stackrel{\text{f.d.d.}}{\Longrightarrow}\left(\mathscr{X}_{t}\right)_{t\geq0}.
		\end{equation}
\end{theo}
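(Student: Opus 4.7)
The proof splits into two parts: the f.d.d.\ convergence of the rescaled velocity, and the covariance analysis for the position in the linear case. The guiding heuristic is that in the subcritical regime the natural scale of $V_t$ is $t^q$, since the quasi-stationary density of $V_t$ would be proportional to $\exp\!\big(-\tfrac{2\rho}{\gamma+1}t^{-\beta}|v|^{\gamma+1}\big)$, whose typical value of $|v|$ is exactly $t^q$. So $t^{-q}V_t$ should converge to the invariant law $\Pi_F$ of the autonomous SDE $dH=dW-F(H)\,ds$.

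For the $V$-convergence, I would set $H_t:=t^{-q}V_t$ and derive, using It\^o's formula and the $\gamma$-homogeneity of $F$,
\begin{equation*}
dH_t \;=\; t^{-q}\,dB_t \;-\; t^{-2q}F(H_t)\,dt \;-\; q\,t^{-1}H_t\,dt .
\end{equation*}
I would then change time via $\tau:=\int_{t_0}^{t}s^{-2q}\,ds=\frac{t^{1-2q}-t_0^{1-2q}}{1-2q}$, well-defined and tending to $+\infty$ since $q<\nicefrac12$. In this scale, $t^{-q}dB_t$ becomes an increment $d\widetilde W_\tau$ of a standard Brownian motion, the $F$-drift becomes the autonomous $-F(\widetilde H_\tau)\,d\tau$, and the linear correction becomes $-q\,t(\tau)^{2q-1}\widetilde H_\tau\,d\tau$ whose prefactor vanishes as $\tau\to\infty$. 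Combining the $L^p$-bounds on $t^{-q}V_t$ from Section~4 with a quantitative ergodicity result (Lyapunov/Meyn--Tweedie or coupling) for the autonomous limit, I would deduce $\widetilde H_\tau\to\widehat H_\infty\sim\Pi_F$ in law, and hence $(t/\eps)^{-q}V_{t/\eps}\to\Pi_F$. For the joint statement at distinct $t_1,\dots,t_d>0$, the $\tau$-images $\tau_i(\eps)$ diverge and their mutual gaps diverge at rate $\eps^{-(1-2q)}$, so by mixing $(\widetilde H_{\tau_i(\eps)})_i\to\otimes^d\Pi_F$ in distribution. Multiplying by the deterministic factors $t_i^q$ produces exactly $(\mathscr V_{t_i})_i$.

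In the linear case $F(v)=\rho v$, the pair $(V,X)$ is Gaussian, so f.d.d.\ convergence of $\eps^{\beta+\nicefrac12}X_{t/\eps}$ reduces to convergence of means and covariances. With $\mu(t):=\exp\!\big(\rho\int_{t_0}^{t}s^{-\beta}\,ds\big)$, one has explicitly $V_t=\mu(t)^{-1}\big(v_0+\int_{t_0}^{t}\mu(s)\,dB_s\big)$, and a stochastic Fubini yields
\begin{equation*}
X_t-x_0 \;=\; v_0\,G(t_0,t) \;+\; \int_{t_0}^{t}G(r,t)\,dB_r , \qquad G(r,t):=\int_r^t\frac{\mu(r)}{\mu(s)}\,ds .
\end{equation*}
An integration by parts in the integral defining $G$ gives $G(r,t)=r^\beta/\rho+o(r^\beta)$ uniformly for $r$ large and $r$ much smaller than $t$ (the condition $\beta<1$, equivalent to $q<\nicefrac12$ here, is essential). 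This leads to $\mathrm{Cov}(X_s,X_t)=\int_{t_0}^{s\wedge t}G(r,s)G(r,t)\,dr\sim (s\wedge t)^{1+2\beta}/(\rho^2(1+2\beta))$, and the assumption $\beta>-\nicefrac12$ ensures $1+2\beta>0$ so that this integral exhibits the desired polynomial growth. Rescaling by $\eps^{\beta+\nicefrac12}$ and evaluating at $s/\eps,t/\eps$ yields the target covariance $K(s,t)$ of $\mathscr X$; combined with the decay $\eps^{\beta+\nicefrac12}(x_0+v_0\,G(t_0,t/\eps))\to 0$, this proves \eqref{eq: cv_X_linear}.

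The main technical obstacle is the ergodic approximation in Part~1: one must show that the residual drift $-q\,t^{2q-1}\widetilde H_\tau$, although its coefficient vanishes, does not spoil either the convergence to $\Pi_F$ or the joint asymptotic independence across times. This calls for a \emph{time-uniform} quantitative ergodicity estimate for the autonomous SDE, robust to a vanishing, linearly-growing time-dependent perturbation; the $L^p$ moment bounds of Section~4 are indispensable here. Part~2 is, by contrast, a direct Gaussian computation, the only delicate step being uniform control of $G(r,t)$ as $r/t\to 0$.
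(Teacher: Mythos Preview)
Your overall strategy for the velocity is exactly the paper's: the time change $\tau=\int_{t_0}^t s^{-2q}\,ds$ is the inverse of the paper's $\varphi_q$, your $\widetilde H_\tau$ is the paper's $V^{(q)}$, and the plan ``compare $\widetilde H$ with the autonomous process $H$ solving $dH=dW-F(H)\,ds$, then use exponential mixing of $H$ to get the product limit $\otimes^d\Pi_F$'' is precisely Steps~1--3 of the paper's proof.

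Where your proposal is thin is the comparison step itself. You lean on ``the $L^p$ bounds of Section~4'' together with a generic ``quantitative ergodicity robust to a vanishing linear perturbation.'' But Proposition~4.1 only gives $\mathbb E[(t^{-q}V_t)^2]\le Ct^{1-2q}$, which \emph{diverges} since $q<\tfrac12$; those bounds are not uniform and do not by themselves feed into a Meyn--Tweedie scheme. The paper does something more concrete: it couples $V^{(q)}$ and $H$ \emph{synchronously} (same Brownian motion), so that the stochastic part cancels, and for $g_\eps(t):=\mathbb E\big[(V^{(q)}_{\varphi^{-1}(t/\eps)}-H_{\varphi^{-1}(t/\eps)})^2\big]$ derives an ODE-type inequality via the monotonicity estimate $(F(x)-F(y))(x-y)\ge C_\gamma\delta^{\gamma-1}(x-y)^2$ on $\{|x-y|>\delta\}$ (this is where $\gamma\ge 1$ enters, through $\tfrac1\gamma$-H\"older continuity of $F^{-1}$). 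The crude moment bound is then absorbed by the exponential factor coming from this inequality, and a Gronwall-type lemma closes the argument. Your outline becomes a proof if you replace the abstract robustness claim by this synchronous-coupling computation.

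For the linear position your route is genuinely different. You solve $V$ explicitly, push through $X$ by stochastic Fubini with kernel $G(r,t)=\int_r^t \mu(r)/\mu(s)\,ds$, and extract the leading term $G(r,t)\sim r^\beta/\rho$ by integration by parts. The paper instead applies It\^o's formula to $t^\beta V_t$, which directly isolates the Gaussian martingale $\int_{t_0}^{t/\eps}s^\beta\,dB_s$ (whose rescaled covariance is exactly $K$) plus remainder terms controlled by the sharper bound $\mathbb E[V_t^2]\le Ct^\beta$ derived on the spot. Both arguments are correct; yours is heavier, since you must justify the asymptotic $G(r,t)\sim r^\beta/\rho$ uniformly enough to pass to the covariance limit, whereas the paper's It\^o trick bypasses any kernel analysis.
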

\begin{remark}
	If $\beta=0$, one can prove using the martingale method, that $(\sqrt{\eps}X_{t/\eps})_{t\geq 0}$ converges towards a Brownian motion. Assume, by way of contradiction, that the process $(\eps^{q}V_{t/\eps})_{t\geq \eps t_0}$ would converge (i.e. were tight), then by the continuous mapping theorem, the process $(\eps X_{t/\eps})_{t\geq 0}$ should converge. This is a contradiction with \eqref{eq: cv_X_linear}. Here is why we deal only with finite-dimensional convergence for the velocity process.
\end{remark}

\section{Changed-of-time processes}
In the following, we suppose that $\gamma>-1$ and 
set $\Omega=\overline{\mathcal{C}}([t_0,+\infty))$ the set of continuous functions, that equal $+\infty$ after their (possibly infinite) explosion time. 
Following the idea used in \cite{GradinaruExistenceasymptoticbehaviour2013}, we first perform a change of time in \eqref{equation time_levy} in order to produce at least one time-homogeneous coefficient in the transformed equation.
For every ${\mathcal{C}}^2$-diffeomorphism $\varphi: [0,t_1)\to [t_0,+\infty)$, let introduce the scaling transformation $\Phi_{\varphi}$ defined, for $\omega \in \Omega $, by
\[\Phi_{\varphi}(\omega)(s):=\dfrac{\omega(\varphi(s))}{\sqrt{\varphi'(s)}}\text{, with }s\in [0,t_1).\]

\noindent
The result containing the change of time transformation can be found in \cite{GradinaruExistenceasymptoticbehaviour2013}, Proposition 2.1, p. 187.\\
Let $V$ be solution to the equation \eqref{equation time_levy}. Thanks to Lévy's characterization theorem of the Brownian motion, $\left(W_t \right)_{t\geq0}:=\left(\medint\int_{0}^{t}\dfrac{\dd B_{\varphi(s)}}{\sqrt{\varphi'(s)}}\right)_{t\geq 0}$ is a standard Brownian motion. Then, by a change of variable $t=\varphi(s)$, one gets
\[V_{\varphi(t)}-V_{\varphi(0)}=\int_{0}^{t}\sqrt{\varphi'(s)}\dd W_s-\int_{0}^{t}\dfrac{F(V_{\varphi(s)})}{\varphi(s)^{\beta}}\varphi'(s)\dd s.\]
The integration by parts formula yields
\[\dd \left(\dfrac{V_{\varphi(s)}}{\sqrt{\varphi'(s)}}\right)=\dd W_s-\dfrac{\sqrt{\varphi'(s)}}{\varphi(s)^{\beta}}F(V_{\varphi(s)})\dd s -\dfrac{\varphi''(s)}{2\varphi'(s)}\dfrac{V_{\varphi(s)}}{\sqrt{\varphi'(s)}}\dd s.\] 
As a consequence, we can state the following result in our context.
\begin{prop}\label{changeoftime}
	If $V$ is a solution to the equation \eqref{equation time_levy}, then $V^{(\varphi)}:=\Phi_{\varphi}(V)$ is a solution to 
	\begin{equation}\label{change of time equation}
	\dd V^{(\varphi)}_s=\dd W_s -\dfrac{\sqrt{\varphi'(s)}}{\varphi(s)^{\beta}}F(\sqrt{\varphi'(s)}V_s^{(\varphi)})\dd s -\dfrac{\varphi''(s)}{\varphi'(s)}\dfrac{V_s^{(\varphi)}}{2}\dd s, \  V_{0}^{(\varphi)}=\dfrac{V_{\varphi(0)}}{\sqrt{\varphi'(0)}},
	\end{equation} where $W_t:=\medint\int_{0}^{t}\dfrac{\dd B_{\varphi(s)}}{\sqrt{\varphi'(s)}}$.
	
	\noindent		
	If $V^{(\varphi)}$ is a solution to \eqref{change of time equation}, then $\Phi_{\varphi}^{-1}(V^{(\varphi)})$ is a solution to  the equation \eqref{equation time_levy}, where $B_t-B_{t_0}:=\medint\int_{t_0}^{t}\sqrt{(\varphi'\circ \varphi^{-1})(s)}\dd W_{\varphi^{-1}(s)}$.
	
	\noindent
	Furthermore, uniqueness in law, pathwise uniqueness or strong existence hold for the equation \eqref{equation time_levy} if and only if they hold for the equation \eqref{change of time equation}.
\end{prop}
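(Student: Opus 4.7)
Since the forward direction—that $V^{(\varphi)} := \Phi_\varphi(V)$ solves \eqref{change of time equation} whenever $V$ solves \eqref{equation time_levy}—is essentially carried out in the computation preceding the statement, the first step of my plan is simply to formalize that derivation. I would verify by direct computation of the quadratic variation (performing the substitution $u = \varphi(s)$) that $\langle W\rangle_t = t$, so Lévy's characterization theorem yields that $W$ is a standard Brownian motion. Then changing variables $t = \varphi(s)$ in \eqref{equation time_levy} and applying the integration-by-parts formula to $s \mapsto V_{\varphi(s)}/\sqrt{\varphi'(s)}$ gives \eqref{change of time equation} as displayed.

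For the converse, I would reverse every step of this construction. Given $V^{(\varphi)}$ a solution to \eqref{change of time equation} driven by $W$, I would define $B$ through the prescribed stochastic integral and check again by Lévy's characterization—this time using the substitution $u = \varphi^{-1}(s)$ to compute $\langle B\rangle$—that $(B_t - B_{t_0})_{t \geq t_0}$ is a standard Brownian motion. Setting $V_t := \sqrt{(\varphi'\circ\varphi^{-1})(t)}\, V^{(\varphi)}_{\varphi^{-1}(t)}$ and applying integration by parts in the reverse direction then recovers \eqref{equation time_levy}. The algebraic consistency check $\Phi_\varphi \circ \Phi_\varphi^{-1} = \mathrm{id}$ is immediate from the definition. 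A small piece of book-keeping to handle is the explosion structure: since $\varphi$ is an increasing $\mathcal{C}^2$-diffeomorphism from $[0,t_1)$ onto $[t_0,+\infty)$, the explosion time of $V^{(\varphi)}$ is exactly $\varphi^{-1}$ applied to the explosion time of $V$, so $\Phi_\varphi$ maps $\Omega$ bijectively onto the analogous space on $[0,t_1)$ and the SDEs can be understood consistently up to explosion.

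Finally, the equivalence of strong existence, pathwise uniqueness, and uniqueness in law follows because $\Phi_\varphi$, together with the canonical bijection $B \leftrightarrow W$ induced by the deterministic time change, is a measurable bijection between weak solutions that preserves the natural filtrations: the identity $\sigma(B_u, u \leq \varphi(s)) = \sigma(W_u, u \leq s)$ holds because $\varphi$ is deterministic, so adaptedness is preserved in both directions. This yields the equivalence of strong existence, while pathwise uniqueness and uniqueness in law transfer because two solutions of one equation driven by the same noise are mapped to two solutions of the other equation driven by the corresponding same noise. The main (minor) obstacle is purely notational—tracking that every statement is valid on the explosion-augmented path space $\Omega$—and it is resolved by applying all arguments to the processes stopped at increasing sequences of exit times and passing to the limit.
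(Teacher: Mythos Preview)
Your proposal is correct and follows exactly the approach the paper takes: the forward direction is the computation displayed just before the proposition (L\'evy's characterization for $W$, change of variable $t=\varphi(s)$, then integration by parts), and the converse plus the transfer of existence/uniqueness properties are handled by reversing the construction via the deterministic bijection $\Phi_\varphi$. The paper in fact only writes out the forward computation explicitly and defers the remaining assertions to \cite{GradinaruExistenceasymptoticbehaviour2013}, Proposition~2.1; your outline simply spells out what that reference contains.
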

In the following, we will use two particular changes of time, depending on which term of \eqref{change of time equation} should become time-homogeneous.
\begin{itemize}
	\item {\sl The exponential change of time}: denoting $\varphi_e: t \mapsto t_0 e^t$, the exponential scaling transformation is defined by $\Phi_e(\omega):s\in \mathbb{R}^{+}\mapsto \dfrac{\omega_{t_0 e^s}}{\sqrt{t_0}e^{\nicefrac{s}{2}}}$, for $\omega\in \Omega $. 
	Thanks to \cref{changeoftime}, the process $V^{(e)}:=\Phi_e(V)$ satisfies the equation 
	\begin{equation*}\label{equation Ve}
	\dd V_s^{(e)}=\dd W_s-\dfrac{V_s^{(e)}}{2}\dd s- t_0^{\nicefrac{1}{2}-\beta} e^{(\nicefrac{1}{2}-\beta)s}F\big(\sqrt{t_0}e^{\nicefrac{s}{2}}V_s^{(e)}\big)\dd s,
	\end{equation*} where $(W_t)_{t\geq0}$ is a standard Brownian motion.
	\item {\sl The power change of time}: for $q= \frac{\beta}{\gamma+1}\neq \frac{1}{2}$, consider $\varphi_{q}\in \mathcal{C}^2([0,t_1))$ the solution to the Cauchy problem
	\[\varphi_{q}'=\varphi_{q}^{2q}, \  \varphi_{q}(0)=t_0. \]
	Clearly, $\varphi_{q}(t)=\big(t_0^{1-2q}+(1-2q)t\big)^{\nicefrac{1}{(1-2q)}}$, when $2q\neq1$, and $\varphi_{q}=\varphi_e$, when $2q=1$.

	The time $t_1$ satisfies $t_1=+\infty$, when $2q\leq1$, and  $t_1=t_{0}^{1-2q}(2q-1)^{-1}$, when $2q>1$. The power scaling transformation is defined by $\Phi_q(\omega):s\in \mathbb{R}^{+}\mapsto \dfrac{\omega(\varphi_{q}(s))}{\varphi_{q}(s)^{q}}$. 
	The process $V^{(q)}:=V^{(\varphi_{q})}$ satisfies the equation
	\begin{equation}\label{power hyp2}
	\dd V_s^{(q)} =\dd W_s- \varphi_{q}^{-\gamma q}(s) F\Big(\sqrt{\varphi'_{q}(s)}V_s^{(q)}\Big)\dd s-q \varphi_{q}^{2q-1}(s)V_s^{(q)}\dd s,
	\end{equation}
	where $(W_t)_{t\geq0}$ is a standard Brownian motion.
\end{itemize}
Adapting the proof of \citer[Propositions 3.2, 3.6 and 3.7 p. 188,]{GradinaruExistenceasymptoticbehaviour2013}, one can prove the following proposition.
\begin{prop}\label{existence hyp1} \label{explosion time}
	For $\gamma\geq0 $, there exists a pathwise unique strong solution to \eqref{equation time_levy}, defined up to the explosion time $\tau_{\infty}$ of $V$. 
	\begin{itemize}
		\item When $\gamma\leq 1$ or for all $v\in \mathbb{R}$,
			$vF(v)\geq 0$, then
		  $\tau_{\infty}$ is a.s.\ infinite. 
		\item When $2q>1$, then $\mathbb{P}(\tau_{\infty}=+\infty)>0$.	
		\item Under \eqref{hyp1_levy}, if $\gamma>1$ and $(F(-1),F(1))\in ((0,+\infty))\times [0,+\infty))\cup (\mathbb{R}\times (-\infty,0))$, then $\mathbb{P}(\tau_{\infty}=+\infty)<1$. 
			
	\end{itemize} 
\end{prop}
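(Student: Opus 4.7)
The plan is to adapt the proofs of Propositions 3.2, 3.6 and 3.7 of \cite{GradinaruExistenceasymptoticbehaviour2013}, relying on the change-of-time transformations of \cref{changeoftime}. For local existence and pathwise uniqueness, the drift $(t,v)\mapsto -t^{-\beta}F(v)$ is continuous and locally bounded on $[t_{0},+\infty)\times\mathbb{R}$, and the homogeneity of $F$ makes it $\mathcal{C}^{1}$ in $v$ away from the origin. Hence it is locally Lipschitz on $\mathbb{R}\setminus\{0\}$, which suffices when $\gamma\geq 1$; when $\gamma\in[0,1)$ the drift is only Hölder near zero and one invokes a Zvonkin/Yamada--Watanabe type argument, exactly as in the cited reference, to recover pathwise uniqueness. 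This yields a unique strong solution up to
\[
\tau_{\infty}:=\lim_{n\to\infty}\tau_{n},\qquad \tau_{n}:=\inf\{t\geq t_{0}:\abs{V_{t}}\geq n\}.
\]

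For the first bullet, the natural Lyapunov function is $\psi(v)=v^{2}$. Itô's formula gives
\[
\dd(V_{t}^{2})=2V_{t}\dd B_{t}+\dd t-2t^{-\beta}V_{t}F(V_{t})\dd t.
\]
Under $vF(v)\geq 0$ the last term is non-positive, so localising at $\tau_{n}$ and taking expectation yields $\mathbb{E}[V_{t\wedge\tau_{n}}^{2}]\leq v_{0}^{2}+(t-t_{0})$. Under $\gamma\leq 1$, the bound $\abs{F(v)}\leq K\abs{v}^{\gamma}$ gives $\abs{vF(v)}\leq K(1+v^{2})$, and Gronwall's lemma furnishes an analogous control linear in $t$. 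In either case, Markov's inequality $\mathbb{P}(\tau_{n}\leq t)\leq n^{-2}\mathbb{E}[V_{t\wedge\tau_{n}}^{2}]\to 0$ as $n\to\infty$ proves $\tau_{\infty}=+\infty$ almost surely.

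For the second bullet, when $2q>1$, I would pass to the power-changed process $V^{(q)}=\Phi_{q}(V)$, which lives on the finite interval $[0,t_{1})$ and solves \eqref{power hyp2}. Non-explosion of $V$ on $[t_{0},+\infty)$ is equivalent to non-explosion of $V^{(q)}$ on $[0,t_{1})$, and the additional drift $-q\varphi_{q}(s)^{2q-1}V_{s}^{(q)}$ is a strongly restoring term which blows up as $s\to t_{1}^{-}$. It thus suffices to produce an event of positive probability on which $V^{(q)}$ remains bounded. This can be done by combining a Girsanov change of measure that removes the $F$-part of the drift on an initial sub-interval $[0,s_{0}]$ (using homogeneity to rewrite $\varphi_{q}^{-\gamma q}F(\sqrt{\varphi_{q}'}\,v)=F(v)$ under \eqref{hyp1_levy}) with an Itô estimate for $(V_{s}^{(q)})^{2}$ on $[s_{0},t_{1})$ that exploits the mean-reverting term to confine the trajectory.

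The third bullet is where I expect the main obstacle to lie. Under \eqref{hyp1_levy} with $\gamma>1$, the sign conditions on $(F(-1),F(1))$ ensure that $-F(v)$ points outward with magnitude $\asymp\abs{v}^{\gamma}$ for $\abs{v}$ large, in at least one direction. The strategy is to show that, with positive probability, the trajectory reaches a threshold large enough for the super-linear drift to take over, and then compare $\abs{V_{t}}$ from below with the solution of a deterministic ODE of the form $\dot u=c\,t^{-\beta}u^{\gamma}$, which explodes in finite time because $\gamma>1$. Making the comparison rigorous is delicate: one typically isolates a good event via Girsanov on an initial time window to place $V$ above the threshold, then restricts to a tube event on which the Brownian increments remain small during the (very short) super-linear explosion phase, so that the pathwise comparison survives up to the deterministic blow-up time.
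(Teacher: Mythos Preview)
Your proposal is essentially the same approach as the paper's: the paper itself does not give a self-contained proof but simply says ``Adapting the proof of Propositions 3.2, 3.6 and 3.7, p.~188, in \cite{GradinaruExistenceasymptoticbehaviour2013}, one can prove the following proposition'' and refers to \cite{LuirardLoilimitemodeles2022} for details. You invoke exactly the same reference and the same three propositions, and your sketches (Lyapunov function $v^{2}$ for non-explosion, the power change of time $\Phi_{q}$ on the finite interval $[0,t_{1})$ for the case $2q>1$, and a Girsanov/comparison argument for explosion when $\gamma>1$ with the stated sign conditions) are the standard ingredients behind those cited results, so there is nothing to correct.
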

\begin{remark}
	Assume that \eqref{hyp1_levy} is satisfied. In the linear case ($\gamma=1$), the drift and the diffusion terms are Lipschitz and satisfy locally linear growth condition. The existence and non-explosion of $V$ follow from \citer[Theorem 2.9, p. 289,]{KaratzasBrownianMotionStochastic1998}.
\end{remark}

For more details, we refer to \cite{LuirardLoilimitemodeles2022}.

\section{Moment estimates of the velocity process}

In this section, we give estimates for the moment of the velocity process. It will be useful to control some stochastic terms appearing later.
\begin{prop}\label{esperance}~ 
	Assume that $\gamma\geq 0$ and $\beta \in \mathbb{R}$. 
	The inequality \[\forall t\geq  t_0, \ \mathbb{E}\left[ \abs{V_t}^{\kappa} \right]\leq C_{\gamma,\kappa,\beta,t_0}t^{\frac{\kappa}{2}}\] holds for
	\begin{itemize}
		\item $\kappa\in [0,1]$, when $\gamma< 1$ and $\beta\geq \frac{\gamma+1}{2}$, 
		\item $\kappa\geq 0$, when for all $v\in \mathbb{R}$, 
		$vF(v)\geq0$.
	\end{itemize} 
	If $\kappa\in [0,1]$, $\gamma< 1$ and $\beta< \frac{\gamma+1}{2}$, then
	\[\forall t\geq  t_0, \ \mathbb{E}\left[ \abs{V_t}^{\kappa} \right]\leq C_{\gamma,\kappa,\beta,t_0}t^{\kappa \frac{1-\beta}{1-\gamma}}.\]
\end{prop}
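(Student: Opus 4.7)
The plan is to apply It\^o's formula to appropriate powers of $V_t$ and reduce each assertion to a differential (or integral) inequality for the moments. \cref{existence hyp1} ensures non-explosion in all three regimes considered, so the usual localization at $\tau_n := \inf\{t \geq t_0 : |V_t|\geq n\}$ together with Fatou's lemma justifies taking expectations in the It\^o expansions, and I will suppress this step below.

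For the dissipative case $vF(v)\geq 0$, I would prove by induction on $n\in\mathbb{N}$ that $\mathbb{E}[V_t^{2n}]\leq C_n t^n$. It\^o's formula on $v\mapsto v^{2n}$ yields
\begin{equation*}
\dd V_t^{2n} = 2nV_t^{2n-1}\dd B_t - 2n t^{-\beta} V_t^{2n-2}\bigl(V_tF(V_t)\bigr)\dd t + n(2n-1)V_t^{2n-2}\dd t,
\end{equation*}
and the dissipation $V_t^{2n-2}(V_tF(V_t))\geq 0$ gives $\tfrac{\dd}{\dd t}\mathbb{E}[V_t^{2n}] \leq n(2n-1)\mathbb{E}[V_t^{2n-2}]$. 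Combined with the induction hypothesis and integration in $t$, this closes the induction. For arbitrary $\kappa\geq 0$, choosing an integer $n$ with $2n\geq \kappa$ and invoking Jensen's inequality gives $\mathbb{E}[|V_t|^\kappa]\leq\mathbb{E}[V_t^{2n}]^{\kappa/(2n)}\leq C\,t^{\kappa/2}$.

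For the other two items ($\gamma<1$, $\kappa\in[0,1]$), I would apply It\^o's formula directly to $V_t^2$ to obtain
\begin{equation*}
\mathbb{E}[V_t^2] = v_0^2 + (t-t_0) - 2\int_{t_0}^t s^{-\beta}\mathbb{E}[V_s F(V_s)]\dd s.
\end{equation*}
Using $|V_sF(V_s)|\leq K|V_s|^{\gamma+1}$ together with Jensen's inequality with exponent $(\gamma+1)/2\leq 1$, this becomes, writing $\psi(t):=\mathbb{E}[V_t^2]$,
\begin{equation*}
\psi(t)\leq v_0^2 + (t-t_0) + 2K\int_{t_0}^t s^{-\beta}\psi(s)^{(\gamma+1)/2}\dd s.
\end{equation*}
I would attack this nonlinear Bihari-type inequality with the ansatz $\psi(t)\leq M t^\alpha$. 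When $\beta\geq (\gamma+1)/2$, the choice $\alpha=1$ is self-consistent since $\int_{t_0}^t s^{-\beta+(\gamma+1)/2}\dd s=O(t)$; when $\beta<(\gamma+1)/2$, the self-similar exponent $\alpha=2(1-\beta)/(1-\gamma)>1$ satisfies $\alpha(\gamma+1)/2 - \beta + 1 = \alpha$ and makes the integral contribution compatible with $M t^\alpha$. A first-breakdown-time argument on $\{t>t_0:\psi(t)>Mt^\alpha\}$ rules out any finite breakdown provided $M$ is chosen large enough (depending on $v_0,t_0,K,\gamma,\beta$). A final Jensen step $\mathbb{E}[|V_t|^\kappa]\leq\psi(t)^{\kappa/2}$ delivers the announced bounds $t^{\kappa/2}$ and $t^{\kappa(1-\beta)/(1-\gamma)}$.

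The main technical obstacle lies in the last step: justifying the nonlinear Gronwall/Bihari bootstrap, i.e.\ showing that the constant $M$ can be chosen uniformly in $t$ and that the ansatz does not break at a finite time. This is a standard concavity argument but requires careful comparison between the affine growth $v_0^2+(t-t_0)$ and the (concave) nonlinear self-similar contribution on the whole interval $[t_0,+\infty)$; additionally, one must check that $\alpha>0$ and $-\beta+\alpha(\gamma+1)/2+1>0$ under the assumed ranges of $\gamma$ and $\beta$, so that the integral is indeed of order $t^\alpha$ rather than being dominated by its lower endpoint.
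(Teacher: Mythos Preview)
Your proof is correct and follows the paper's overall strategy. For the dissipative case $vF(v)\geq 0$, both you and the paper apply It\^o's formula, drop the non-positive drift contribution, and climb moments inductively (the paper applies It\^o to $|v|^\kappa$ for $\kappa>2$ and inducts in steps of $2$, which is your even-moment induction in slightly different packaging).

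For the case $\gamma<1$ the paper takes a mildly different path: instead of applying It\^o to $V_t^2$, it uses the triangle inequality directly on the integral form of the SDE to get
\[
\mathbb{E}\bigl[|V_{t\wedge T_n}|\bigr] \leq C_{t_0}\sqrt{t} + K\int_{t_0}^t s^{-\beta}\,\mathbb{E}\bigl[|V_{s\wedge T_n}|\bigr]^\gamma \dd s,
\]
and then feeds this into an explicit Gronwall-type lemma with sublinear exponent $r=\gamma$ (\cref{gronwall}). Your second-moment route produces the same kind of Bihari inequality with exponent $r=(\gamma+1)/2$, and the ``technical obstacle'' you flag---making the self-similar bootstrap rigorous---is precisely what that lemma handles: for $g(t)\leq a(t)+\int_{t_0}^t b(s)g(s)^r\dd s$ with $r<1$ one obtains $g(t)\leq C_r\bigl[a(t)+((1-r)\int_{t_0}^t b)^{1/(1-r)}\bigr]$, and plugging in $a(t)=v_0^2+(t-t_0)$ and $b(s)=2Ks^{-\beta}$ recovers exactly your exponents. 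So no breakdown-time argument is needed; the concavity of $g\mapsto g^r$ lets one integrate the inequality in closed form. The paper's first-moment route is marginally more direct (no It\^o cross-term $V_sF(V_s)$ to estimate), while yours would in principle yield the wider range $\kappa\in[0,2]$ in the $\gamma<1$ case, though only $\kappa\in[0,1]$ is claimed.
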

	\begin{remark}
	When $-1<\gamma<0$, it can be proved that for all $t\geq t_0$, $\mathbb{E}\left[ \abs{V_t} \right]\leq C_{\gamma,\beta,t_0}\sqrt{t}$, without hypothesis of the positivity of the function $v\mapsto vF(v)$.
\end{remark}
\begin{proof}
	\begin{steps}
	\item Assume that $\gamma\geq1$ and that for all $v\in \mathbb{R}$, 
	$vF(v)\geq0$.\\
	 Define, for all $n\geq 0$, the stopping times $T_n:= \inf\{t\geq t_0, \ \abs{V_t}\geq n \}$.
	By Itô's formula, for all $t\geq t_0$, we have
	\[\begin{aligned}
	V_{t\wedge T_n}^2 & = v_0^2+ \int_{t_0}^{t\wedge T_n} 2V_s \dd B_s - \int_{t_0}^{t\wedge T_n} 2s^{-\beta}V_sF(V_s)\dd s+({t\wedge T_n}-t_0) \\&=
	v_0^2+ \int_{t_0}^{t}\mathbb{1}_{s\leq T_n} 2V_s \dd B_s 
	- \int_{t_0}^{t\wedge T_n} 2s^{-\beta}V_sF(V_s)\dd s
	+(t\wedge T_n-t_0)
	\\ &\leq  v_0^2+ \int_{t_0}^{t}\mathbb{1}_{s\leq T_n} 2V_s \dd B_s+({t}-t_0).
	\end{aligned}\]
	Since $\int_{t_0}^{t}4\mathbb{1}_{s\leq T_n} V_s^2 \dd s \leq 4n^2(t-t_0)  <+\infty$, taking expectation yields 
	\[ \mathbb{E}\left[ V_{t\wedge T_n}^2 \right]\leq v_0^2+(t-t_0) \leq C_{t_0}t.\]  
	Set $\kappa\in [0,2] $, we obtain by Jensen's inequality that
	\begin{equation}\label{estimates}
	\mathbb{E}\left[ \abs{V_t}^{\kappa}\right]\leq \mathbb{E}\left[ \abs{V_t}^{2}\right] ^{\frac{\kappa}{2}} \leq \left( \liminf_{n\to +\infty}\mathbb{E}\left[ V_{t\wedge T_n}^2 \right] \right)^{\frac{\kappa}{2}} \leq  C_{\kappa, t_0}t^{\frac{\kappa}{2}}.
	\end{equation}  
	When $\kappa > 2$, the function $v\mapsto \abs{v}^{\kappa}$ is $\mathcal{C}^2$, so by Itô's formula, we can write for all $t\geq t_0$,
	\begin{multline*}
	\abs{V_{t\wedge T_n}}^{\kappa} = \abs{v_0}^{\kappa}+ \int_{t_0}^{t\wedge T_n} \kappa \sgn(V_s) \abs{V_s}^{\kappa-1} \dd B_s - \int_{t_0}^{t\wedge T_n} \kappa s^{-\beta}\abs{V_s}^{\kappa-1}\sgn(V_s)F(V_s)
	\dd s\\+\int_{t_0}^{t\wedge T_n}\dfrac{\kappa(\kappa-1)}{2}\abs{V_s}^{\kappa-2}\dd s.
	\end{multline*}
	In addition, using the hypothesis on the sign of $F$, we have
	\begin{equation}\label{Ito}
		\abs{V_{t\wedge T_n}}^{\kappa} \leq\abs{v_0}^{\kappa} + \int_{t_0}^{t}\mathbb{1}_{s\leq T_n} \kappa \sgn(V_s) \abs{V_s}^{\kappa-1} \dd B_s+\int_{t_0}^{t\wedge T_n}\dfrac{\kappa(\kappa-1)}{2}\abs{V_s}^{\kappa-2}\dd s.
	\end{equation}
We observe that $\int_{t_0}^{t}\kappa^2V_s^{2\kappa-2}\mathbb{1}_{s\leq T_n} \dd s \leq \kappa^2n^{2\kappa-2}(t-t_0)<+\infty$. Taking expectation in \eqref{Ito}, we obtain
\[	\mathbb{E}\left[\abs{V_{t}}^{\kappa} \right] \leq \liminf_{n\to +\infty} \mathbb{E}\left[\abs{V_{t\wedge T_n}}^{\kappa}   \right] \leq \abs{v_0}^{\kappa}+  \int_{t_0}^{t}\dfrac{\kappa(\kappa-1)}{2}\mathbb{E}\left[\abs{V_s}^{\kappa-2}\right]\dd s.
\] 
When $0\leq \kappa -2 \leq 2$, we can upper bound $\mathbb{E}\left[\abs{V_s}^{\kappa-2}\right]$ by injecting \eqref{estimates} and get
\[\mathbb{E}\left[\abs{V_{t}}^{\kappa} \right]  \leq  \abs{v_0}^{\kappa}+  \int_{t_0}^{t}\dfrac{\kappa(\kappa-1)}{2}C_{\kappa, t_0}s^{\frac{\kappa-2}{2}}\dd s \leq C_{\kappa, t_0}s^{\frac{\kappa}{2}}.\]
The same method is then applied inductively to prove the inequality for all $\kappa>2$.
	\item Assume now that $\gamma\in [0,1[$. Fix $\kappa \in [0,1]$. Then Jensen's inequality yields, for all $t\geq t_0$, $\mathbb{E}\left[\abs{V_{t}}^\kappa \right]\leq \mathbb{E}\left[\abs{V_{t}}\right]^\kappa$, hence it suffices to verify the inequality only for $\kappa=1$.\\
	Define, for all $n\geq 0$, the stopping times $T_n:= \inf\{t\geq t_0, \ \abs{V_t}\geq n \}$ and let us recall that under both hypotheses \eqref{hyp1_levy} or \eqref{hyp2_levy}, there exists a positive constant $K$, such that $\abs{F\left(v\right)}\leq K\abs{v}^{\gamma}$. We can write, for $t\geq t_0$ and $n\geq0$,
	\[\begin{aligned}
	\abs{V_{t\wedge T_n}}& \leq \abs{v_0-B_{t_0}}+\abs{B_{t\wedge T_n}}+\int_{t_0}^{t\wedge T_n}s^{-\beta}\abs{F(V_{s\wedge T_n}) }\dd s\\ &\leq \abs{v_0-B_{t_0}}+\abs{B_{t\wedge T_n}}+\int_{t_0}^{t\wedge T_n}s^{-\beta}K\abs{V_{s\wedge T_n} }^{\gamma}\dd s.
	\end{aligned}\]
	By noting that $\gamma\in [0,1[$ and $(B^2_t-t)_{t\geq 0}$ is a martingale, taking expectation we get
	\[\begin{aligned}
	\mathbb{E}\left[ \abs{V_{t\wedge T_n}}  \right] & \leq \mathbb{E}\left[\abs{v_0-B_{t_0}} \right] +\mathbb{E}\left[\abs{B_{t\wedge T_n}} \right] +\int_{t_0}^{t}s^{-\beta}K\mathbb{E}\left[\abs{V_{s\wedge T_n} }^{\gamma}\right]\dd s \\ &\leq \mathbb{E}\left[\abs{v_0-B_{t_0}} \right] +\sqrt{\mathbb{E}\left[B_{t\wedge T_n}^2 \right]} +\int_{t_0}^{t}s^{-\beta}K\mathbb{E}\left[\abs{V_{s\wedge T_n} }\right]^{\gamma}\dd s \\ & \leq \mathbb{E}\left[\abs{v_0-B_{t_0}} \right] +\sqrt{\mathbb{E}\left[{t\wedge T_n} \right]} +\int_{t_0}^{t}s^{-\beta}K\mathbb{E}\left[\abs{V_{s\wedge T_n} }\right]^{\gamma}\dd s\\ & \leq C_{t_0}\sqrt{t} +\int_{t_0}^{t}s^{-\beta}K\mathbb{E}\left[\abs{V_{s\wedge T_n} }\right]^{\gamma}\dd s.
	\end{aligned} \] The function $g_n:t\mapsto\mathbb{E}\left[ \abs{V_{t\wedge T_n}}  \right]$ is bounded by $n$. Applying a Gronwall-type lemma, stated below (\cref{gronwall}) 
	and Fatou's lemma, for $\beta\neq 1$ and for all $t \geq t_0$, we end up with
	\[\begin{aligned} \ \mathbb{E}\left[ \abs{V_{t}}  \right]\leq \liminf_{n\to +\infty}\mathbb{E}\left[ \abs{V_{t\wedge T_n}}  \right]&\leq C_{\gamma} \left[C_{t_0}\sqrt{t}+\left(\dfrac{1-\gamma}{1-\beta}K(t^{1-\beta}-t_0^{1-\beta})\right)^{\frac{1}{1-\gamma}}  \right]\\ & \leq
		C_{\gamma, \beta, t_0}\begin{cases} \sqrt{t} & \mbox{ if }\beta \geq \frac{\gamma+1}{2},\\
			t^{\frac{1-\beta}{1-\gamma}} & \mbox{else.}
		\end{cases}
	\end{aligned}\]
	The case $\beta=1$ can be treated similarly.
\end{steps}
\end{proof}

\begin{lem}[Gronwall-type lemma] \label{gronwall}
	Fix $r\in [0,1)$ and $t_0\in \mathbb{R}$.
	Assume that $g$ is a non-negative real-valued function, $b$ is a positive function and $a$ is a differentiable real-valued  function. Moreover, suppose that the function $bg^{r}$ is continuous. If
	\begin{equation}\label{eq:debut}
	\forall t \geq t_0, \ g(t)\leq a(t)+\int_{t_0}^{t}b(s)g(s)^{r}\dd s, 
	\end{equation} 
	then,
	\[\forall t \geq t_0, \ g(t)\leq 2^{\frac{1}{1-r}} \left[a(t)+\left((1-r)\int_{t_0}^tb(s)\dd s\right)^{\frac{1}{1-r}}  \right].\]
\end{lem}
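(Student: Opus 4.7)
The plan is a standard Bihari--LaSalle argument, with a minor twist to convert the power-sum bound one naturally obtains into the additive form stated in the conclusion.

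First I freeze $a$: for a fixed $T\geq t_{0}$, set $A:=\sup_{s\in[t_{0},T]}a(s)$, which in the intended applications equals $a(T)$ since $a$ is non-decreasing there. Then \eqref{eq:debut} yields $g(s)\leq A+G(s)$ for every $s\in[t_{0},T]$, where $G(s):=\int_{t_{0}}^{s}b(u)g(u)^{r}\dd u$. The assumption that $bg^{r}$ is continuous makes $G$ of class $\mathcal{C}^{1}$ with $G'=bg^{r}\geq 0$.

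Next I set $u:=A+G$ and exploit $g\leq u$ together with $r\in[0,1)$ to produce a separable differential inequality:
\[
u'(s)=b(s)g(s)^{r}\leq b(s)u(s)^{r}.
\]
Assuming $A>0$ (otherwise replace $A$ by $A+\delta$ and let $\delta\downarrow 0$ at the end) one has $u\geq A>0$, so I divide by $u^{r}$ and recognise $\tfrac{d}{ds}u^{1-r}=(1-r)u^{-r}u'$, which upon integration over $[t_{0},T]$ with $u(t_{0})=A$ gives
\[
u(T)^{1-r}\leq A^{1-r}+(1-r)\int_{t_{0}}^{T}b(s)\dd s.
\]

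To close the argument I apply the convexity inequality $(x+y)^{p}\leq 2^{p-1}(x^{p}+y^{p})$, valid for $x,y\geq 0$ and $p\geq 1$, with $p=\tfrac{1}{1-r}\geq 1$. Raising the previous display to the power $\tfrac{1}{1-r}$ yields
\[
g(T)\leq u(T)\leq 2^{\frac{1}{1-r}-1}\Big[A+\Big((1-r)\int_{t_{0}}^{T}b(s)\dd s\Big)^{\frac{1}{1-r}}\Big]\leq 2^{\frac{1}{1-r}}\Big[a(T)+\Big((1-r)\int_{t_{0}}^{T}b(s)\dd s\Big)^{\frac{1}{1-r}}\Big],
\]
which is the claim (the last step uses $A=a(T)$, as guaranteed by the monotone regime relevant to the applications). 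I expect no real obstacle: the only mild subtlety is the $\delta$-perturbation to justify dividing by $u^{r}$ in the degenerate case $A=0$, everything else being a routine separation of variables and a convexity bound.
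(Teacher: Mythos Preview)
Your core mechanism is the same Bihari--LaSalle manoeuvre the paper uses: pass to the function $H(t)=a(t)+\int_{t_0}^{t}b\,g^{r}$ (your $u$), derive a differential inequality, divide by $H^{r}$, integrate, and then split the $(1-r)$-th root with the convexity bound $(x+y)^{1/(1-r)}\le 2^{1/(1-r)-1}(x^{1/(1-r)}+y^{1/(1-r)})$. Where you diverge from the paper is in the treatment of $a$: you freeze it at $A=\sup_{[t_0,T]}a$ before integrating, whereas the paper keeps $a(t)$ alive by adding $a'(t)$ to both sides of $b g^{r}\le b H^{r}$, dividing through by $H^{r}$, and bounding $a'/H^{r}\le a'/a^{r}$; integrating then gives $H(t)^{1-r}\le a(t)^{1-r}+(1-r)\int_{t_0}^{t}b$ with the \emph{time-dependent} $a(t)$, not a frozen supremum. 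This is exactly why the lemma assumes $a$ differentiable---an assumption your argument never touches.

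The gap in your write-up is the last line: replacing $A=\sup_{[t_0,T]}a$ by $a(T)$ is not licensed by the stated hypotheses, which do not require $a$ to be non-decreasing. As written you have proved the bound with $\sup_{[t_0,T]}a$ in place of $a(T)$, which is a different (and for the paper's two applications equivalent) statement, but not the lemma as formulated. If you want the conclusion exactly as stated, you need to carry $a$ through the integration rather than freeze it---i.e.\ do what the paper does. That said, the paper's own step $a'/H^{r}\le a'/a^{r}$ also tacitly requires $a'\ge 0$ (and $a>0$), so in practice both arguments live under the same implicit monotonicity/positivity assumption on $a$; your version just makes that dependence explicit, at the cost of not matching the lemma's wording.
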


\begin{proof}
	For $t\geq t_0$, since $r\geq 0$,
	\[ g(t)^{r}\leq \left(a(t)+\int_{t_0}^{t}b(s)g(s)^{r}\dd s\right)^{r}\,, \]
	then, multiplying by $b(t)>0$,
	\[ b(t)g(t)^{r}\leq b(t)\left(a(t)+\int_{t_0}^{t}b(s)g(s)^{r}\dd s\right)^{r}\,.\]
	Now, let us make appear the derivative of $H$
	\[a'(t)+ b(t)g(t)^{r}\leq a'(t)+ b(t)\left(a(t)+\int_{t_0}^{t}b(s)g(s)^{r}\dd s\right)^{r}, \]
	that is 
	\[\dfrac{a'(t)+ b(t)g(t)^{r}}{\left(a(t)+\int_{t_0}^{t}b(s)g(s)^{r}\dd s\right)^{r}}\leq b(t)+\dfrac{a'(t)}{\left(a(t)+\int_{t_0}^{t}b(s)g(s)^{r}\dd s \right)^{r}}\leq b(t)+\dfrac{a'(t)}{a(t)^{r}}. \]
	Integrating, since $r\neq1$, we obtain
	\[ (1-r)^{-1}\left[\left(a(t)+\int_{t_0}^{t}b(s)g(s)^{r}\dd s\right)^{1-r} -a(t_0)^{1-r}\right]\leq (1-r)^{-1}\left[a(t)^{1-r}-a(t_0)^{1-r} \right]+\int_{t_0}^tb(s)\dd s  \]
	or equivalently, setting $H$ for the right-hand side of \eqref{eq:debut} and using that $r<1$, we get
	\[ H(t)^{1-r}\leq a(t)^{1-r}+(1-r)\int_{t_0}^tb(s)\dd s . \]
	Since $\frac{1}{1-r}>0$ and using \eqref{eq:debut} 
	\[g(t)\leq \left( a(t)^{1-r}+(1-r)\int_{t_0}^tb(s)\dd s  \right)^{\frac{1}{1-r}}\leq C_{r} \left[a(t)+\left((1-r)\int_{t_0}^tb(s)\dd s\right)^{\frac{1}{1-r}}  \right].\]
	This concludes the proof of the lemma.
\end{proof}

\begin{remark}
	Call $H(t)$ the right-hand side of \eqref{eq:debut}.
	If $g$ is not continuous, note that the function $H$  
	is continuous and satisfies \eqref{eq:debut} (since $b$ is positive and $g\leq H$). Therefore, one can apply the lemma to $H$ and then use the inequality $g\leq H$. 
\end{remark}

\section{Proof of the asymptotic behaviour of the solution}
This section is devoted to the proofs of our main results.\\
\subsection{Asymptotic behaviour in the super-critical regime under both assumptions}

In this section, we assume that $\gamma\geq0$ and $q>\frac{1}{2}$.
\begin{proof}[Proof of \cref{main_thm_inh}] We split the proof into three steps.
	\begin{steps}
	\item We note that it is enough to prove that the process \[(V_t^{(\eps)})_{t\geq 0}:=(\sqrt{\eps}V_{t/\eps})_{t\geq 0}\] converges in distribution to a Brownian motion in the space of continuous functions ${\mathcal{C}}([0,+\infty))$ endowed by the uniform topology. In order to see $V^{(\eps)}$ as a process of $\mathcal{C}([0,+\infty))$, let us state for all $s\in [0,\eps t_0]$, $V_s^{(\eps)}:=V_{\eps t_0}^{(\eps)}=\sqrt{\eps}v_0 $.\\
	For every $\eps \in (0,1]$ and $t\geq \eps t_0$, 
we can write
	\[\eps^{\nicefrac{3}{2}}X_{t/\eps}=\eps^{\nicefrac{3}{2}} x_0 +\int_{\eps t_0}^{t}V_s^{(\eps)} \dd s. \] 
	Clearly, the theorem will be proved once we show that $g_{\eps}(V^{(\eps)}_{\bullet}):=(V^{(\eps)}_{\bullet},\int_{\eps t_0}^{\bullet}V_s^{(\eps)} \dd s)$ converges weakly  in ${\mathcal{C}}([0,+\infty))$ endowed by the uniform topology.
Here  the mapping $g_{\eps}: v\mapsto \left(v_t, \int_{\eps t_0}^{t}v_s \dd s \right)_{t\geq0}$ is defined and valued on ${\mathcal{C}}((0,+\infty))$. This mapping is converging, as $\eps\to 0$, to the continuous mapping $g:v\mapsto \left(v_t, \int_{0}^{t}v_s \dd s \right)_{t\geq0}$.\\
We have, for every $\eps \in (0,1]$ and $t\geq \eps t_0$,
	\[\begin{aligned}
		V_t^{(\eps)}=\sqrt{\eps}V_{t/\eps}=& \sqrt{\eps}(v_0 - B_{t_0})+ \sqrt{\eps}B_{t/\eps} - \sqrt{\eps}\int_{t_0}^{t/\eps}F(V_s)s^{-\beta}\dd s \\
		=&\sqrt{\eps}(v_0 - B_{t_0})+ B_{t}^{(\eps)} - \eps^{\beta-1/2}\int_{\eps t_0}^{t}F(V_{u/\eps})u^{-\beta}\dd u.
	\end{aligned}\]
	By self-similarity, $B^{(\eps)}:=(\sqrt{\eps}B_{t/\eps})_{t\geq0}$ has the same distribution as a standard Brownian motion.\\
	Assume that the convergence of the rescaled velocity process is proved in the strong way, that is
\begin{equation}\label{prob equiv}
\forall T\geq t_0, \ \sup_{\eps t_0\leq t \leq T} \abs{V_t^{(\eps)}-B_t^{(\eps)}}\stackrel{\mathbb{P}}{\longrightarrow}0,\ \mbox{ as }\eps\to 0.
\end{equation}
Then it suffices to prove that $g_{\eps}(B^{(\eps)})\stackrel{\text{}}{\Longrightarrow} g(\mathcal{B})$ and $\dd_u \left(g_{\eps}(V^{(\eps)}), g_{\eps}(B^{(\eps)}) \right)\stackrel{\mathbb{P}}{\longrightarrow}0$, as $\eps \to 0$ (see \citer[Theorem 3.1, p. 27,]{BillingsleyConvergenceprobabilitymeasures1999}).\\
On the one hand, the process $B^{(\eps)}$ being a 
Brownian motion and $\abs{\cdot}_{\mathbb{R}^2}$ denoting a norm on $\mathbb{R}^2$, the first convergence follows from 

\begin{equation}\label{prob equiv1}
\forall T\geq t_0 ,\ \sup_{\eps t_0\leq t \leq T} \abs{g_{\eps}(\mathcal{B}_t)-g(\mathcal{B}_t)}_{\mathbb{R}^2}\stackrel{\mathbb{P}}{\longrightarrow}0,\ \mbox{ as }\eps\to 0.
\end{equation}
Let us prove \eqref{prob equiv1}.
For every $\eps t_0\leq t\leq T$, we get
\[\begin{aligned}
	\abs{g_{\eps}(\mathcal{B}_t)-g(\mathcal{B}_t)}_{\mathbb{R}^2} &= \abs{\int_{0}^{\eps t_0}\mathcal{B}_s\dd s} \\ &\leq \int_0^{\eps t_0}\abs{\mathcal{B}_s}\dd s.
\end{aligned}
\]
Hence,
\[\mathbb{E}\left[ \sup_{\eps t_0\leq t \leq T} \abs{g_{\eps}(\mathcal{B}_t)-g(\mathcal{B}_t)}_{\mathbb{R}^2} \right] \leq \int_0^{\eps t_0}\mathbb{E}\abs{\mathcal{B}_s}\dd s \leq C \int_0^{\eps t_0}\sqrt{s}\dd s \underset{\eps \to 0}{\longrightarrow}0.
\]
On the other hand, we prove that 
\begin{equation}\label{prob equiv2}
\forall T\geq  t_0 ,\ \sup_{\eps t_0\leq t \leq T} \abs{g_{\eps}(V^{(\eps)}_t)- g_{\eps}(B_t^{(\eps)})}_{\mathbb{R}^2} \stackrel{\mathbb{P}}{\longrightarrow}0,\ \mbox{ as }\eps\to 0.
\end{equation}
For every $\eps t_0\leq t \leq T$, using \eqref{prob equiv}
\[
\begin{aligned}
\abs{g_{\eps}(V^{(\eps)}_t)- g_{\eps}(B_t^{(\eps)})}_{\mathbb{R}^2}  & =  \abs{V^{(\eps)}_t-B_t^{(\eps)} }+ \abs{\int_{\eps t_0}^t V_s^{(\eps)}-B_s^{(\eps)}\dd s}\\& 
\leq (1+ T-\eps t_0  ) \sup_{\eps t_0\leq t \leq T}\abs{V^{(\eps)}_t-B_t^{(\eps)}}\stackrel{\mathbb{P}}{\longrightarrow}0.
\end{aligned}
\]
	\item Let us prove now \eqref{prob equiv}. Recall that under both hypothesis \eqref{hyp1_levy} and \eqref{hyp2_levy}, there exists a positive constant $K$, such that $(\sqrt{\eps})^{\gamma}\abs{F\left(\dfrac{V_u^{(\eps)}}{\sqrt{\eps}}\right)}\leq K\abs{V_u^{(\eps)}}^{\gamma}$. 
	Modifying the factor in front of the integral part, we get
	\begin{equation*}\label{eq:V and B}
	V_t^{(\eps)} =\sqrt{\eps}(v_0 - B_{t_0})+ \sqrt{\eps}B_{t/\eps} - \eps^{\beta-\nicefrac{(\gamma+1)}{2}}\int_{\eps t_0}^{t}(\sqrt{\eps})^{\gamma}F\left(\dfrac{V_u^{(\eps)}}{\sqrt{\eps}}\right)u^{-\beta}\dd u.
	\end{equation*}
	\\
	It follows that, for all $ t_0\leq T$,
	\[ \begin{aligned}
	\sup_{\eps t_0\leq t \leq T} \abs{V_t^{(\eps)}-B_t^{(\eps)}}\leq & \sqrt{\eps}\abs{v_0-B_{t_0}}+\eps^{\beta-\nicefrac{(\gamma+1)}{2}}\sup_{\eps t_0\leq t \leq T}\abs{ \int_{\eps t_0}^{t}(\sqrt{\eps})^{\gamma}F\left(\dfrac{V_u^{(\eps)}}{\sqrt{\eps}}\right)u^{-\beta}\dd u }\\ \leq & \sqrt{\eps}\abs{v_0-B_{t_0}}+\eps^{\beta-\nicefrac{(\gamma+1)}{2}} \int_{\eps t_0}^{T}K\abs{V_u^{(\eps)}}^{\gamma}u^{-\beta}\dd u.
	\end{aligned} 
	\]
	Taking the expectation and using moment estimates (\cref{esperance}), we obtain, when $\beta \neq \frac{\gamma}{2}+1$ and since $\beta >\frac{\gamma+1}{2}$,
	\[\begin{aligned}
	\eps^{\beta-\nicefrac{(\gamma+1)}{2}} \mathbb{E}\left[\int_{\eps t_0}^{T}K\abs{V_u^{(\eps)}}^{\gamma}u^{-\beta}\dd u\right]&=  \eps^{\beta-\nicefrac{(\gamma+1)}{2}}\int_{\eps t_0}^{T}K\mathbb{E}\left[\abs{V_u^{(\eps)}}^{\gamma}\right]u^{-\beta}\dd u \\ &\leq  \eps^{\beta-\nicefrac{(\gamma+1)}{2}}\int_{\eps t_0}^{T}KC_{\gamma,\beta,t_0}u^{\frac{\gamma}{2}-\beta}\dd u\\& \leq  C\left( \eps^{\beta-\nicefrac{(\gamma+1)}{2}} T^{\frac{\gamma}{2}-\beta+1}-t_0^{\frac{\gamma}{2}-\beta+1}\sqrt{\eps}\right) \underset{\eps \to 0}{\longrightarrow} 0.
	\end{aligned} \]
	Hence, setting $r= \min(\frac{1}{2}, \beta-\nicefrac{(\gamma+1)}{2})>0$  \[\mathbb{E}\left[ \sup_{\eps t_0\leq t \leq T} \abs{V_t^{(\eps)}-B_t^{(\eps)}} \right] = O(\eps^{r}).\]
	The case $\beta=\frac{\gamma}{2}+1$ can be treated similarly to get 
	\[\mathbb{E}\left[ \sup_{\eps t_0\leq t \leq T} \abs{V_t^{(\eps)}-B_t^{(\eps)}} \right] = O(\sqrt{\eps}\ln(\eps)).\]
	This concludes the proof.
\end{steps}
\end{proof}
\begin{remark}
	One can observe that the only moment in this proof, when we need the condition "$\gamma<1$ or for all $v \in \mathbb{R},\ vF(v)$" is when we are proving the moment estimates. 
\end{remark}
\subsection{Asymptotic behaviour in the critical regime under \eqref{hyp1_levy}} \label{critical}
Assume in this section that $\beta= \frac{\gamma+1}{2}$ and \eqref{hyp1_levy} is satisfied.  
\begin{proof}[Proof of \cref{critique}]
\begin{steps}
\item
As in the first step of the previous section, it suffices to prove the convergence of the rescaled velocity process $(\sqrt{\eps}V_{t/\eps})_t$. Keeping same notations, we prove that $g_{\eps}(V^{(\eps)})$ converges in distribution in $\mathcal{C}([0,+\infty))$ to $g(\mathcal{V})$. In order to see $V^{(\eps)}$ as a process of $\mathcal{C}([0,+\infty))$, let us set for all $s\in [0,\eps t_0]$, $V_s^{(\eps)}:=V_{\eps t_0}^{(\eps)}=\sqrt{\eps}v_0 $. Call $P_{\eps}$, $P$ the distribution of $V^{(\eps)}$, $\mathcal{V}$ respectively. Then, using Pormanteau theorem (see \citer[Theorem 2.1 p.16]{BillingsleyConvergenceprobabilitymeasures1999}), it suffices to prove that for all function $h:{\mathcal{C}}([0,+\infty))\times{\mathcal{C}}([0,+\infty)) \to \mathbb{R}$ bounded and uniformly continuous, 
\[\int_{\mathcal{C}([0,+\infty))^2}h(g_{\eps}(\omega))\dd P_{\eps}(\dd \omega) \underset{\eps \to 0}{\longrightarrow }\int_{\mathcal{C}([0,+\infty))^2}h(g(\omega))\dd P(\dd \omega).  \]
Take a bounded and uniformly continuous function $h$.
By assumption, one knows that $P_{\eps}\Longrightarrow P$, hence, by \citer[Problem 4.12 p. 64,]{KaratzasBrownianMotionStochastic1998}, it suffices to prove that the uniformly bounded sequence $(h\circ g_{\eps})$ of continuous functions on $\mathcal{C}([0,+\infty))$ converges uniformly on compact subsets of $\mathcal{C}([0,+\infty))$ to the continuous function $h\circ g$.
Let $K$ be a compact set of $\mathcal{C}([0,+\infty))$. Then, for all $\omega\in K$, $\max_{[0,\eps t_0]}\abs{\omega}$  is uniformly bounded by a constant, called $M$. 
\\
Fix $\eta>0$. By the uniform continuity of $h$, there exists $\delta>0$ such that for all $\omega \in K$,
\[\dd_u(g_{\eps}(\omega),g(\omega))\leq \delta \Rightarrow \abs{h\circ g_{\eps}(\omega),h\circ g (\omega)}\leq \eta.  \]
However, there exists $\eps_1>0$ small enough, such that for all $\eps\leq \eps_1$, for all $\omega\in K$, 
\[\dd_u(g_{\eps}(\omega),g(\omega)) \leq C \abs{\int_0^{\eps t_0}\omega(s)\dd s}\leq C\eps t_0 M \leq \delta. \]
\item
We first prove the f.d.d.\ convergence.
The exponential scaling process $V^{(e)}$ satisfies the time-homogeneous equation
\begin{equation}\label{equation_H}
\dd V_s^{(e)}=\dd W_s-\dfrac{V_s^{(e)}}{2}\dd s-F\big(V_s^{(e)}\big)\dd s,
\end{equation} where $(W_t)_{t\geq0}$ is a standard Brownian motion.\\
Using the bijection induced by the exponential change of time (\cref{changeoftime}), we get
\[ \left(\dfrac{V_{t_0e^t}}{\sqrt{t_0}e^{t/2}} \right)_{t\geq 0}= (H_t)_{t\geq 0},\]
as solutions of the same SDE, starting at the same point.
This can also be written as \begin{equation*}\label{equal in law}
\left(\dfrac{V_{t}}{\sqrt{t}} \right)_{t\geq t_0}= (H_{\log(t/t_0)})_{t\geq t_0}.
\end{equation*}
So, we have, for all $\eps>0$, and $(t_1, \cdots, t_d)\in [\eps t_0,+\infty)^d$,
\begin{equation}\label{eq: V and H}
	\left(\dfrac{V_{\eps^{-1}t_1}}{\sqrt{\eps^{-1}t_1}}, \cdots, \dfrac{V_{\eps^{-1}t_d}}{\sqrt{\eps^{-1}t_d}} \right)= \left( H_{\log(t_1)+\log((\eps t_0)^{-1})}, \cdots, H_{\log(t_d)+\log((\eps t_0)^{-1})} \right).
\end{equation}
As in \cite{GradinaruExistenceasymptoticbehaviour2013}, the scale function and the speed measure of $H$ are respectively
\[{\tt p}(x):=\int_{0}^x \exp\left(\dfrac{y^2}{2}+\dfrac{2}{\gamma+1}\sgn(y)F(\sgn(y))\abs{y}^{\gamma+1}\right) \dd y \]
and 
\[\nu_F(\dd x):= \exp\left(-\dfrac{x^2}{2}-\dfrac{2}{\gamma+1}\sgn(x)F(\sgn(x))\abs{x}^{\gamma+1}\right)\dd x.  \]
By the ergodic theorem (\citer[Theorem 23.15 p. 465]{KallenbergFoundationsModernProbability2002}), $H$ is $\Lambda_{F}$-ergodic, where $\Lambda_{F}$ is the probability measure associated to $\nu_F$.
Call $\widetilde{H}$ the solution of the time homogeneous equation \eqref{equation_H} such that the initial condition $\widetilde{H}_{-\infty}$ has the distribution $\Lambda_F$.\\
For $t_1, \cdots, t_d \in [\eps t_0, +\infty)^d$, let $\Lambda_{F,t_1, \cdots, t_d}:= \mathcal{L}(\widetilde{H}_{t_1}, \cdots, \widetilde{H}_{t_d})$ be the distribution of $(\widetilde{H}_{t_1}, \cdots, \widetilde{H}_{t_d})$. Then, for all $s\geq 0$, $\Lambda_{F,t_1, \cdots, t_d}= \Lambda_{F,t_1+s, \cdots, t_d+s}$. Indeed, thanks to the invariance property of $\Lambda_F$, $(\widetilde{H}_t)_{t\in \mathbb{R}}$ and $(\widetilde{H}_{t+s})_{t\in \mathbb{R}}$ satisfy the same SDE, starting at the same distribution. As a consequence, for all $\eps>0$,
\begin{equation}\label{eq: stationnarity}
	\mathcal{L}\left( \widetilde{H}_{\log(t_1)+\log((\eps t_0)^{-1})}, \cdots, \widetilde{H}_{\log(t_d)+\log((\eps t_0)^{-1})} \right)= \Lambda_{F,\log(t_1), \cdots, \log(t_d)}.
\end{equation} 
Moreover, by exponential ergodicity, for every $\psi: \mathbb{R}^d\to \mathbb{R}$ continuous and bounded function, we can prove that 
\begin{equation} \label{eq: ergodicity}
    \abs{\mathbb{E}\left[\psi\left(H_{\log(t_1/(t_0\eps))},\cdots, H_{\log(t_d/(t_0\eps))} \right)  \right]-\mathbb{E}\left[\psi\left(\widetilde{H}_{\log(t_1/(t_0\eps))},\cdots,  \widetilde{H}_{\log(t_d/(t_0\eps))} \right) \right] } \underset{\eps \to 0}{\longrightarrow}0.
\end{equation}
We postpone the proof of this convergence in Step 3.
\\To conclude this step, gather \eqref{eq: V and H}, \eqref{eq: stationnarity} and \eqref{eq: ergodicity} to get
\[\left(\dfrac{V_{\eps^{-1}t_1}}{\sqrt{\eps^{-1}t_1}}, \cdots, \dfrac{V_{\eps^{-1}t_d}}{\sqrt{\eps^{-1}t_d}} \right)\underset{\eps\to 0}{\Longrightarrow} \Lambda_{F,\log(t_1), \cdots, \log(t_d)}. \]
This can be written as
\[\left(\sqrt{\eps}V_{t_1/\eps}, \cdots, \sqrt{\eps}V_{t_d/\eps} \right)\underset{\eps\to 0}{\Longrightarrow} T*\Lambda_{F,\log(t_1), \cdots, \log(t_d)}, \]
where $T*\Lambda_{F,\log(t_1), \cdots, \log(t_d)}$ is the pushforward of the measure $\Lambda_{F,\log(t_1), \cdots, \log(t_d)}$ by the linear map $T(u_1, \cdots, u_d):= (\sqrt{t_1}u_1, \cdots, \sqrt{t_d}u_d)$.
\item Let us now prove \eqref{eq: ergodicity}. 
Pick $\eps t_0 \leq s\leq t$. Set $h_0=v_0\sqrt{t_0}^{-1}$.
Actually we prove a more general result, which will also be useful in the last regime. The convergence \eqref{eq: ergodicity} will be a direct consequence of this lemma. 
\begin{lem}\label{lem: ergodic_fdd}
	 Let $H$ be an exponential ergodic process with invariant measure $\nu$, solution to a SDE driven by a Brownian motion. Pick a continuous function $\phi: [t_0, +\infty)\to \mathbb{R}$ satisfying $\lim_{s\to +\infty}\phi(s)=+\infty$.\\
	Then, for all integer $d\geq 1$, every continuous and bounded function $\psi: \mathbb{R}^d\to \mathbb{R}$, all $h_0\in \mathbb{R}$ and all $t_1,\cdots, t_d \in [\eps t_0,+\infty)^d$,
	\begin{equation*} 
		\abs{\mathbb{E}\left[\psi\left(H_{\phi(\eps^{-1}t_1)},\cdots, H_{\phi(\eps^{-1}t_d)} \right) \Big | H_0=h_0 \right]-\mathbb{E}\left[\psi\left(H_{\phi(\eps^{-1}t_1)},\cdots, H_{\phi(\eps^{-1}t_d)} \right)\Big | H_0\sim \nu \right] } \underset{\eps \to 0}{\longrightarrow}0.
	\end{equation*}
\end{lem}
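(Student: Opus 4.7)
My plan is to invoke the Markov property to reduce this $d$-dimensional marginal comparison to a single-time total-variation estimate, after which the assumed exponential ergodicity closes the argument.

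Up to composing $\psi$ with a permutation of coordinates (the resulting function is still bounded and continuous), I can assume the instants $\phi(\eps^{-1}t_1) \leq \cdots \leq \phi(\eps^{-1}t_d)$ are sorted in increasing order. Write $s_i(\eps) := \phi(\eps^{-1}t_i)$. The assumption $\phi(u) \to +\infty$ as $u \to +\infty$ together with $t_1 > 0$ forces $s_1(\eps) \to +\infty$ as $\eps \to 0$, which will be the only quantitative ingredient needed from $\phi$.

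Let $P_t(x,\cdot)$ denote the transition kernel of $H$. The Markov property yields the disintegration
\[
\mathcal{L}(H_{s_1},\ldots,H_{s_d}\mid H_0=h_0)(\dd x_1,\ldots,\dd x_d) = P_{s_1}(h_0,\dd x_1) \prod_{i=2}^{d} P_{s_i-s_{i-1}}(x_{i-1},\dd x_i),
\]
with an analogous formula when $H_0 \sim \nu$, except that invariance of $\nu$ replaces the first marginal by $\nu$ itself. Since the two joint laws share the same downstream Markov kernels, applying the contraction property of Markov kernels for the total variation norm,
$\|\mu K - \tilde\mu K\|_{\mathrm{TV}} \leq \|\mu - \tilde\mu\|_{\mathrm{TV}}$, gives
\[
\bigl\| \mathcal{L}(H_{s_1},\ldots,H_{s_d}\mid H_0=h_0) - \mathcal{L}(H_{s_1},\ldots,H_{s_d}\mid H_0\sim\nu) \bigr\|_{\mathrm{TV}} \leq \bigl\| P_{s_1(\eps)}(h_0,\cdot) - \nu \bigr\|_{\mathrm{TV}}.
\]
Combined with the elementary bound $|\mathbb{E}[\psi(Y)]-\mathbb{E}[\psi(\tilde Y)]| \leq \|\psi\|_\infty \|\mathcal{L}(Y)-\mathcal{L}(\tilde Y)\|_{\mathrm{TV}}$ valid for any bounded measurable $\psi$, this reduces the claim to showing $\|P_{s_1(\eps)}(h_0,\cdot) - \nu\|_{\mathrm{TV}} \to 0$.

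By the assumed exponential ergodicity, there exist $\lambda > 0$ and $C = C(h_0)$ such that $\|P_t(h_0,\cdot) - \nu\|_{\mathrm{TV}} \leq C e^{-\lambda t}$, and since $s_1(\eps) \to +\infty$ this is exactly the desired convergence. The only subtlety I foresee is the verification of exponential ergodicity in the total-variation sense for the specific SDEs appearing in Theorems \ref{critique} and \ref{sub-critical}; this is standard and follows from a Foster--Lyapunov argument applied to $V(x) = 1 + x^2$ for the drift $-\frac{x}{2} - F(x)$ in the critical regime (using the sign assumption $xF(x) \geq 0$ when $\gamma \geq 1$), and analogously to the drift $-F(x) = -\rho\,\sgn(x)|x|^\gamma$ in the sub-critical regime, where confinement is even stronger.
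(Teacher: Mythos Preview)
Your proof is correct and follows essentially the same route as the paper: both use the Markov property to disintegrate the joint law, observe that the downstream transition kernels are shared, and thereby reduce the $d$-dimensional comparison to the single-time total-variation distance $\|P_{s_1(\eps)}(h_0,\cdot)-\nu\|_{\mathrm{TV}}$, which vanishes by exponential ergodicity. The only cosmetic differences are that the paper writes out the case $d=2$ (shifting time so the first instant becomes $0$) whereas you handle general $d$ directly via the contraction of Markov kernels in total variation.
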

\begin{proof}
For the sake of clarity, let us give a proof for $d=2$. The general case $d\geq2$ is similar.\\
Let $\psi: \mathbb{R}^2\to \mathbb{R}$ be a continuous and bounded function. 

We set $\mu_{\eps}:= \mathcal{L}\left(H_{\phi(\eps^{-1}s)} \Big| H_0=h_0 \right)$. 
We use the generalized Markov property of solution to SDE driven by Brownian motion process (see \citer[Theorem 21.11 p. 421]{KallenbergFoundationsModernProbability2002}).
This leads to  
\[
        \mathbb{E}\left[\psi\left(H_{\phi(\eps^{-1}s)}, H_{\phi(\eps^{-1}t)} \right) \Big | H_0=h_0 \right]
        = \mathbb{E}\left[\psi\left(H_{0}, H_{\phi(\eps^{-1}t)-\phi(\eps^{-1}s)} \right)\Big | H_0\sim \mu_{\eps }\right]
  \]
  and, since $\Lambda_F$ is invariant,
  \[
    \mathbb{E}\left[\psi\left(H_{\phi(\eps^{-1}s)}, H_{\phi(\eps^{-1}t)} \right) \Big | H_0\sim \nu \right]
    = \mathbb{E}\left[\psi\left(H_{0}, H_{\phi(\eps^{-1}t)-\phi(\eps^{-1}s)} \right)\Big | H_0\sim \nu\right].
\]
Then, we are reduced to prove 
\begin{equation*}
\abs{\mathbb{E}\left[\psi\left(H_{0}, H_{\phi(\eps^{-1}t)-\phi(\eps^{-1}s)} \right)\Big | H_0\sim \mu_{\eps }\right]-\mathbb{E}\left[\psi\left(H_{0}, H_{\phi(\eps^{-1}t)-\phi(\eps^{-1}s)} \right)\Big | H_0\sim \nu\right]}\underset{\eps \to 0}{\longrightarrow}0.
\end{equation*} 
    Hence, setting $p(t,x,\dd y):= \mathbb{P}_x(H_t\in \dd y)$ and $\|.\|_{TV}$ for the total variation norm, we get
    \begin{multline*}
        \abs{\mathbb{E}\left[\psi\left(H_{0}, H_{\phi(\eps^{-1}t)-\phi(\eps^{-1}s)} \right)\Big | H_0\sim \mu_{\eps }\right]-\mathbb{E}\left[\psi\left(H_{0}, H_{\phi(\eps^{-1}t)-\phi(\eps^{-1}s)} \right)\Big | H_0\sim \nu\right]}\\ \leq \abs{\int_{\mathbb{R}}\mathbb{E}\left[\psi\left(H_{0}, H_{\phi(\eps^{-1}t)-\phi(\eps^{-1}s)} \right)\Big | H_0=y\right]\left(\mu_{\eps}(\dd y)- \nu(\dd y) \right)}
		\\ \leq \left\|\psi \right\|_{\infty}\int_{\mathbb{R}}\abs{p\left(\phi(\eps^{-1}s),h_0, \dd y\right)-\nu(\dd y)} \\ \leq \left\|\psi \right\|_{\infty} \|p\left(\phi(\eps^{-1}s),h_0, \cdot\right)-\nu\|_{TV}.
    \end{multline*}
    We let $\eps \to 0$, using the exponential ergodicity of $H$.
\end{proof}
\item Let us prove now the tightness of the family of laws of continuous process $\left(V^{(\eps)}\right)_{t\geq \eps t_0}=\left(\sqrt{\eps} V_{t/\eps} \right)_{t\geq \eps t_0}$ on every compact interval $[m,M]$, $0<m\leq M$. We prove the Kolmogorov criterion stated in \citer[Problem 4.11 p. 64,] {KaratzasBrownianMotionStochastic1998}.\\
Take $\eps_0$ small enough such that for all $\eps\leq \eps_0$, $\eps t_0 \leq m$. Fix $m\leq s\leq t \leq M$ and $\alpha>2$. Recalling that $B^{(\eps)}$ is a Brownian motion, using Jensen's inequality, moment estimates (\cref{esperance}) and the relation $\beta=\frac{\gamma+1}{2}$, we can write
\[
	\begin{aligned}
		\mathbb{E}\left[\abs{V_t^{(\eps)}-V_s^{(\eps)}}^{\alpha}  \right] &\leq C_{\alpha}\mathbb{E}\left[\abs{B_t^{(\eps)}-B_s^{\eps} }^{\alpha} \right] +C_{\alpha}\mathbb{E}\left[\abs{\sqrt{\eps}\int_{s/\eps}^{t/\eps}F(V_u)u^{-\beta}\dd u }^{\alpha} \right]
		\\ & \leq C_{\alpha} \mathbb{E}\left[\abs{B_t-B_s }^{\alpha} \right] +C_{\alpha} \eps^{1-\frac{\alpha}{2}}(t-s)^{\alpha-1}\mathbb{E}\left[\int_{s/\eps}^{t/\eps}\abs{F(V_u)}^{\alpha}u^{-\beta\alpha}\dd u  \right] \\ & \leq C_{\alpha} \mathbb{E}\left[\abs{B_{t-s} }^{\alpha} \right] +C_{\alpha}\eps^{1-\frac{\alpha}{2}}(t-s)^{\alpha-1}\int_{s/\eps}^{t/\eps}u^{\frac{\gamma\alpha}{2}-\beta\alpha}\dd u   \\ & \leq C_{\alpha} (t-s)^{\frac{\alpha}{2}}+C_{\alpha}\eps^{1-\frac{\alpha}{2}}(t-s)^{\alpha-1}\int_{s/\eps}^{t/\eps}u^{-\frac{\alpha}{2}}\dd u  \\ & \leq C_{\alpha} (t-s)^{\frac{\alpha}{2}}+C_{\alpha}(t-s)^{\alpha-1}(t^{1-\frac{\alpha}{2}}-s^{1-\frac{\alpha}{2}}) \\ &\leq C_{\alpha} (t-s)^{\frac{\alpha}{2}}+ C_{\alpha,m,M}(t-s)^{\alpha-1} \\ & \leq C_{\alpha,m,M}(t-s)^{\frac{\alpha}{2}}.
	\end{aligned}
\]
Since $\alpha>2$, then $\frac{\alpha}{2}>1$ and the upper bound does not depend on $\eps$.
Furthermore, by moment estimates (\cref{esperance}),
\[\sup_{\eps\leq \eps_0}\mathbb{E}\left[\abs{V_m^{(\eps)}} \right] \leq \sqrt{m }<+\infty. \]
\item[\textsc{Conclusion.}] The previous steps yields weak convergence on every compact set (\citer[Theorem 13.1 p. 139,]{BillingsleyConvergenceprobabilitymeasures1999}). The conclusion follows from \citer[Theorem 16.7 p. 174,]{BillingsleyConvergenceprobabilitymeasures1999}, since all processes considered are continuous.
\end{steps}
\end{proof}
\begin{ex}
	We will see that the limiting process $\mathcal{V}$ is more explicit in the linear case ($\gamma=1$). Choose $F(1)=1$, $F(-1)=-1$, the process $\widetilde{H}$ solution of \eqref{equation_H} is in fact an Ornstein Uhlenbeck process with invariant measure $\Lambda_F(dx):=e^{-\frac{3x^2}{2}}\dd x$. It is a centered Gaussian process, hence for all $s_1,\cdots, s_d$, its f.d.d. $\Lambda_{F, s_1, \cdots, s_d}$ are Gaussian. As a consequence, knowing the covariance function $K$ is enough to provide the law of the process. 
	Since $\widetilde{H}$ is a stationary Ornstein-Uhlenbeck process, one has $K:s,t\mapsto \frac{1}{3} e^{-\frac{3}{2}\abs{t-s}}$.
	Hence, the limiting process $\mathcal{V}$ having f.d.d $T*\Lambda_{F,\log(t_1), \cdots, \log(t_d)}$ is a centered Gaussian process with covariance function $s, t\mapsto \frac{1}{3}\frac{(s\wedge t)^2}{s\vee t}$.	
\end{ex}
\subsection{Asymptotic behaviour in the subcritical regime under \eqref{hyp1_levy}}
Assume in this section that $\beta<\frac{\gamma+1}{2}$ and $F:v\mapsto \rho\sgn(v)\abs{v}^{\gamma}$ with $\gamma\geq 1$. For simplicity, we shall write $\varphi$ instead of $\varphi_q$.
\begin{proof}[Proof of \cref{sub-critical}]
\begin{steps}
\item We first prove the f.d.d.\ convergence of the velocity process $(V_t^{(\eps)})_{t\geq \eps t_0}:= (\eps^q V_{t/\eps})_{t\geq \eps t_0}$. Again we give a proof only for $d=2$, since the general case $d\geq2$ is similar. \\
The power scaling process $V^{(q)}$, solution to \eqref{power hyp2} satisfies 
\begin{equation*}
	\dd V_s^{(q)} =\dd W_s- F\Big(V_s^{(q)}\Big)\dd s-q \varphi^{2q-1}(s)V_s^{(q)}\dd s.
\end{equation*}
We call $H$ the ergodic process solution to the SDE
\begin{equation}\label{SDE_homogene}
	\dd H_s =\dd W_s- F\Big(H_s\Big)\dd s,\mbox{ with } H_0=h_0:=v_0 t_0^{-q}.
\end{equation}
We denote by $\Pi_F(\dd x):= e^{-\frac{2\rho}{\gamma+1}\abs{x}^{\gamma+1}}\dd x$ its invariant measure.
Using the bijection induced by the power change of time (\cref{changeoftime}), as solutions of the same SDE starting at the same point, 
we have, for all $\eps>0$, and $s,t\in [\eps t_0,+\infty)^2$,
\begin{equation*}
\left(\eps^q \dfrac{V_{\eps^{-1}s}}{s^q}, \eps^q \dfrac{V_{\eps^{-1}t}}{t^q}   \right)= \left(V^{(q)}_{\varphi^{-1}(\eps^{-1}s)},  V^{(q)}_{\varphi^{-1}(\eps^{-1}t)} \right).
\end{equation*}
Using \citer[Theorem 3.1 p. 27,]{BillingsleyConvergenceprobabilitymeasures1999}, it suffices to prove that for all $s,t\in [\eps t_0,+\infty)^2$,
\begin{itemize}
	\item $\abs{\left(H_{\varphi^{-1}(\eps^{-1}s)},H_{\varphi^{-1}(\eps^{-1}t)} \right)-\left(V^{(q)}_{\varphi^{-1}(\eps^{-1}s)},V^{(q)}_{\varphi^{-1}(\eps^{-1}t)} \right) }_{\mathbb{R}^2}\underset{\eps \to 0}{\longrightarrow}0$.
	\item $\left(H_{\varphi^{-1}(\eps^{-1}s)},H_{\varphi^{-1}(\eps^{-1}t)} \right)\underset{\eps \to 0}{\Longrightarrow} \Pi_F \otimes \Pi_F $.
\end{itemize}
\item We prove that for all $t\geq\eps t_0$, $\mathbb{E}\left[\left(H_{\varphi^{-1}(\eps^{-1}t)}- V^{(q)}_{\varphi^{-1}(\eps^{-1}t)} \right)^2 \right] \underset{\eps\to0}{\longrightarrow} 0$.
\\
Pick $t\geq \eps t_0$. For simplicity of notation, we write $H^{(\varphi,\eps)}_t:=H_{\varphi^{-1}(\eps^{-1}t)}$ and $V^{(\varphi,\eps)}_t:=V^{(q)}_{\varphi^{-1}(\eps^{-1}t)}$.
We have
\begin{equation*}
	\dd \left(H^{(\varphi,\eps)}_t- V^{(\varphi,\eps)}_t\right)= -\eps^{2q-1}\left(F(H^{(\varphi,\eps)}_t)-F(V^{(\varphi,\eps)}_t) \right)t^{-2q}\dd t+ qt^{-1}V^{(\varphi,\eps)}_t\dd t.
\end{equation*}
Pick $\delta>0$. By straightforward differentiation, we can write
\begin{multline*}
	\dd \left(H^{(\varphi,\eps)}_t- V^{(\varphi,\eps)}_t\right)^2 \leq  
	-\dfrac{2\eps^{2q-1}}{t^{2q}}\left(F(H^{(\varphi,\eps)}_t)-F(V^{(\varphi,\eps)}_t) \right)\left(H^{(\varphi,\eps)}_t- V^{(\varphi,\eps)}_t\right)\mathbb{1}_{\abs{H^{(\varphi,\eps)}_t- V^{(\varphi,\eps)}_t}>\delta}\dd t\\+2t^{-1}qV^{(\varphi,\eps)}_t\left(H^{(\varphi,\eps)}_t- V^{(\varphi,\eps)}_t\right)\dd t.
\end{multline*}
Since $\gamma\geq1$, the function $F^{-1}$ is $\frac{1}{\gamma}$-Hölder, therefore there exists $C_{\gamma}>0$ such that, 
\begin{multline}\label{eq: ODE_V_q}
	\dd \left(H^{(\varphi,\eps)}_t- V^{(\varphi,\eps)}_t\right)^2 \leq  
	-\dfrac{2\eps^{2q-1}}{t^{2q}}C_{\gamma}\delta^{\gamma-1}\left(H^{(\varphi,\eps)}_t- V^{(\varphi,\eps)}_t\right)^2\mathbb{1}_{\abs{H^{(\varphi,\eps)}_t- V^{(\varphi,\eps)}_t}>\delta}\dd t\\+2t^{-1}qV^{(\varphi,\eps)}_t\left(H^{(\varphi,\eps)}_t- V^{(\varphi,\eps)}_t\right)\dd t.
\end{multline}
 We set $g_{\eps}(t)=\mathbb{E}\left[\left(H^{(\varphi,\eps)}_t- V^{(\varphi,\eps)}_t
\right)^2 \right]$ and $\widetilde{g}_{\eps}(t)=\mathbb{E}\left[\left(H^{(\varphi,\eps)}_t- V^{(\varphi,\eps)}_t
\right)^2\mathbb{1}_{\abs{H^{(\varphi,\eps)}_t- V^{(\varphi,\eps)}_t}>\delta} \right]$. 
Taking expectation in \eqref{eq: ODE_V_q}, we get

\begin{equation}\label{ODE}
\widetilde{g}_{\eps}'(t)\leq -\dfrac{2\eps^{2q-1}}{t^{2q}}C_{\gamma}\delta^{\gamma-1} \widetilde{g}_{\eps}(t)+b_{\eps}(t), \mbox{ with } \widetilde{g}_{\eps}(\eps t_0)=0
\end{equation}
where
\[b_{\eps}(t):=  2t^{-1}q\mathbb{E}\left[V^{(\varphi,\eps)}_t\left(H^{(\varphi,\eps)}_t- V^{(\varphi,\eps)}_t\right)\right].\]
Using Cauchy-Schwarz inequality and moment estimates (\cref{esperance}), we have
\[	\abs{b_{\eps}(t)}\leq 2t^{-1}\abs{q}\sqrt{\mathbb{E}\left[\left(V^{(\varphi,\eps)}_t\right)^2\right]}\sqrt{g_{\eps}(t)} \leq C 2t^{-1}\abs{q}\sqrt{\eps^{2q-1}t^{1-2q}g_{\eps}(t)}.
\]

Set $h(t):= \dfrac{1}{1-2q}C_{\gamma}\delta^{\gamma-1}t^{1-2q}$.
We use the comparison theorem for ordinary differential equation on \eqref{ODE} to get
\begin{equation*}\label{solution_ODE}
\widetilde{g}_{\eps}(t)\leq  \int_{\eps t_0}^t b_{\eps}(s)\exp(-2\eps^{2q-1}\left(h(t)-h(s)\right))\dd s.
\end{equation*}
As a consequence, we deduce that
\[\begin{aligned}
	g_{\eps}(t)&\leq \delta^2+\widetilde{g}_{\eps}(t)
	\\ & \leq \delta^2+ \exp(-2\eps^{2q-1}h(t))C\int_{\eps t_0}^t 2s^{-1} \sqrt{\eps^{2q-1}s^{1-2q}}\sqrt{g_{\eps}(s)\exp(2\eps^{2q-1}h(s))} \exp(\eps^{2q-1}h(s))\dd s.
\end{aligned}\]
Applying a Gronwall-type lemma (\cref{gronwall}) to the function $g_{\eps}\exp(2\eps^{2q-1}h)$, we obtain
\begin{equation*}
	\begin{aligned}
		g_{\eps}(t)&\leq C\delta^2+C\left(\int_{\eps t_0}^t s^{-1} \sqrt{\eps^{2q-1}s^{1-2q}} \exp(-\eps^{2q-1}(h(t)-h(s)))\dd s\right)^2.
	\end{aligned}
\end{equation*}
We conclude, using the dominated convergence theorem, since $1-2q>0$, that for all $\delta>0$
\begin{equation}\label{limsup}
	0\leq \limsup_{\eps\to 0}g_{\eps}(t)\leq \delta^2.
\end{equation}
To prove the domination hypothesis, notice that by optimization of the function $x\mapsto \sqrt{x}\exp(-Ax)$,
\[	\mathbb{1}_{\eps t_0\leq s \leq t}s^{-1} \sqrt{\eps^{2q-1}s^{1-2q}} \exp(-\eps^{2q-1}(h(t)-h(s))) \leq \mathbb{1}_{0\leq s \leq t} s^{-\frac{1}{2}-q} \dfrac{1}{\sqrt{h(t)-h(s)}}.
\]
This function is integrable, since $1-2q>0$.\\
We let $\delta\to 0$ in \eqref{limsup} to conclude that for all $t>0$, $\lim_{\eps\to 0}g_{\eps}(t)=0$.
\item Pick $s,t\in [\eps t_0,+\infty)^2$. We prove that the solution $H$ to \eqref{SDE_homogene} satisfies
\begin{equation}\label{decouplage_H}
	\left(H_{\varphi^{-1}(\eps^{-1}s)},H_{\varphi^{-1}(\eps^{-1}t)} \right)\underset{\eps \to 0}{\Longrightarrow} \Pi_F \otimes \Pi_F. 
\end{equation}
Observe that
\begin{equation}\label{eq: difference_varphi}
	\varphi^{-1}(\eps^{-1}t)-\varphi^{-1}(\eps^{-1}s)= \dfrac{t^{1-2q}-s^{1-2q}}{\eps^{1-2q}} \underset{\eps \to 0}{\longrightarrow}0.
\end{equation} 
By \cref{lem: ergodic_fdd}, for every continuous and bounded function $\varphi$, we can write
\begin{equation*}
    \abs{\mathbb{E}\left[\psi\left(H_{\varphi^{-1}(\eps^{-1}s)}, H_{\varphi^{-1}(\eps^{-1}t)} \right) \Big | H_0=h_0 \right]-\mathbb{E}\left[\psi\left(H_{\varphi^{-1}(\eps^{-1}s)}, H_{\varphi^{-1}(\eps^{-1}t)} \right) \Big | H_0\sim \Pi_F \right] } \underset{\eps \to 0}{\longrightarrow}0.
\end{equation*}
Hence, it suffices to prove that for every bounded continuous functions $f,g:\mathbb{R}\to \mathbb{R}$, the following convergence holds
\begin{equation*}
	\lim_{\eps\to 0}\mathbb{E}\left[f\left(H_{\varphi^{-1}(\eps^{-1}s)}\right)g\left(H_{\varphi^{-1}(\eps^{-1}t)} \right) \Big | H_0\sim \Pi_F \right]= \Pi_F(f)\Pi_F(g). 
\end{equation*}
The following reasoning is inspired from \citer[the proof of Lemma 3.2 p. 7-8]{CattiauxAsymptoticanalysisdiffusion2010}. 
Since $H_0$ is starting from the invariant measure, up to considering $f-\Pi_F(f)$ and $g-\Pi_F(g)$, we can assume that $f$ and $g$ have zero $\Pi_F$-mean.
We call $(P_t)_{t\geq0}$ the semigroup of $H$, then we get, by invariance property of $\Pi_F$,
\[\begin{aligned}
	\mathbb{E}\left[f\left(H_{\varphi^{-1}(\eps^{-1}s)}\right)g\left(H_{\varphi^{-1}(\eps^{-1}t)} \right) \Big | H_0\sim \Pi_F \right]&= \int P_{\varphi^{-1}(\eps^{-1}s)}\left( fP_{\varphi^{-1}(\eps^{-1}t)-\varphi^{-1}(\eps^{-1}s)}g\right) \dd \Pi_F\\&=\int fP_{\varphi^{-1}(\eps^{-1}t)-\varphi^{-1}(\eps^{-1}s)}g\dd \Pi_F.
\end{aligned}\]
Note that $U:v\mapsto \frac{\abs{v}^{1+\gamma}}{1+\gamma}$ is a convex function, thus a $\lambda$-Poincaré inequality holds for the process $H$ (see \cite{BobkovIsoperimetricAnalyticInequalities1999} p. 1904).
This implies the exponential decay of the variance (see \citer[Theorem 4.2.5 p. 183,]{BakryAnalysisGeometryMarkov2014}), i.e.\ there exists a constant $C>0$ such that, since $\Pi_F$ is a probability measure, 
\[ \begin{aligned}
	\abs{\int fP_{\varphi^{-1}(\eps^{-1}t)-\varphi^{-1}(\eps^{-1}s)}g\dd \Pi_F}&\leq \left\|fP_{\varphi^{-1}(\eps^{-1}t)-\varphi^{-1}(\eps^{-1}s)}g \right\|_2 \\ &\leq \left\|f \right\|_{\infty}\left\|P_{\varphi^{-1}(\eps^{-1}t)-\varphi^{-1}(\eps^{-1}s)}g \right\|_2 \\ & \leq C\left\|f \right\|_{\infty}\left\|g \right\|_{\infty}e^{-\lambda\left(\varphi^{-1}(\eps^{-1}t)-\varphi^{-1}(\eps^{-1}s)\right)}.
\end{aligned}\]
We deduce \eqref{decouplage_H} from \eqref{eq: difference_varphi}.
\item We prove the f.d.d.\ convergence of the position process $(X_t^{(\eps)})_{t\geq \eps t_0}:=(\eps^{\beta+\frac{1}{2}}X_{t/\eps})_{t\geq \eps t_0}$. Take $\gamma=1$ and $\beta\in (-\frac{1}{2},1)$. Pick $t\geq \eps t_0$. By Itô's formula applied to $t^{\beta}V_t$, we get
\begin{equation*}
	\rho X_t^{(\eps)}= \eps^{\beta+\frac{1}{2}}(t_0^{\beta}v_0+x_0)-\eps^{\frac{1-\beta}{2}}t^{\beta}V_t^{(\eps)}+\eps^{\beta+\frac{1}{2}}\int_{t_0}^{t/\eps}s^{\beta}\dd B_s +\eps^{\beta+\frac{1}{2}}\int_{t_0}^{t/\eps}\beta s^{\beta-1}V_s \dd s.
\end{equation*}
Since $\beta>-\frac{1}{2}$, the first term converges to 0 in probability as $\eps \to 0$.
Moreover, by Itô's formula, for all $t\geq t_0$,
\begin{equation*}
	\dfrac{\dd }{\dd t}\mathbb{E}\left[V^2_t\right]= - 
2\rho s^{-\beta} \mathbb{E}\left[V^2_s\right] +1.
\end{equation*}
Hence, by comparison theorem for ordinary differential equation, 
\begin{equation*}
	\mathbb{E}\left[V^2_t\right]\leq \exp(-2\rho\dfrac{t^{1-\beta}}{1-\beta})\left(v_0^2+\int_{t_0}^t \exp(2\rho\dfrac{s^{1-\beta}}{1-\beta})\dd s\right).
\end{equation*}
Using an integration by parts, we deduce that there exists a positive constant $C$ such that, for all $t\geq t_0$,
\begin{equation*}\label{eq: moment_V_lineaire}
	\mathbb{E}\left[V^2_t\right]\leq Ct^{\beta}.
\end{equation*}
As a consequence, we obtain 
\[\begin{aligned}
	\mathbb{E}\left[\abs{-\eps^{\frac{1-\beta}{2}}t^{\beta}V_t^{(\eps)}+\eps^{\beta+\frac{1}{2}}\int_{t_0}^{t/\eps}\beta s^{\beta-1}V_s \dd s} \right]& \leq \eps^{\frac{1-\beta}{2}}t^{\beta} \mathbb{E}\left[\abs{V_t^{(\eps)}}\right]+\eps^{\beta+\frac{1}{2}}\int_{t_0}^{t/\eps}\beta s^{\beta-1}\mathbb{E}\left[\abs{V_s}\right] \dd s
	\\& \leq C \eps^{\frac{1}{2}}t^{\frac{3\beta}{2}}+C\eps^{\frac{1-\beta}{2}}t^{\frac{3\beta}{2}}-C\eps^{\beta+\frac{1}{2}}t_0^{\frac{3\beta}{2}} \underset{\eps \to 0}{\longrightarrow}0.
\end{aligned}
	\]
It remains to study the centered Gaussian process $M^{(\eps)}_t:=\eps^{\beta+\frac{1}{2}}\int_{t_0}^{t/\eps}s^{\beta}\dd B_s$.
By Itô's isometry and since $\beta>-\frac{1}{2}$, for all $\eps t_0\leq s \leq t$, we can write
\[	\textrm{Cov}(M^{(\eps)}_s,M^{(\eps)}_t)=\eps^{2\beta+1}\int_{t_0}^{s/\eps}u^{2\beta}\dd s \underset{\eps \to 0}{\sim}\dfrac{ s^{1+2\beta}}{1+2\beta}.
\]
Since the convergence of centered Gaussian processes is characterized by the convergence of their covariance functions, the conclusion follows from \citer[Theorem 3.1, p. 27,]{BillingsleyConvergenceprobabilitymeasures1999}.
\end{steps}
\end{proof}

\noindent
\paragraph{Acknowledgements}

The authors would like to thank Jürgen Angst for valuable exchanges and suggestions about this work and Thomas Cavallazzi for his careful
reading of the manuscript. We would also like to thank the anonymous referees for her/his careful reading of the manuscript and useful advices.

\bibliographystyle{alpha}
\bibliography{kinetic_inh_gene}

\end{document}